\newtheorem{theorem}{Theorem}
\newtheorem{lemma}[theorem]{Lemma}
\newtheorem{proposition}[theorem]{Proposition}
\newtheorem{question}[theorem]{Question}
\newtheorem{definition}[theorem]{Definition}
\numberwithin{equation}{section}
\begin{document}

\newcommand{\cc}{\mathfrak{c}}
\newcommand{\N}{\mathbb{N}}
\newcommand{\BB}{\mathbb{B}}
\newcommand{\C}{\mathbb{C}}
\newcommand{\Q}{\mathbb{Q}}
\newcommand{\G}{\mathbb{G}}
\newcommand{\R}{\mathbb{R}}
\newcommand{\Z}{\mathbb{Z}}
\newcommand{\T}{\mathbb{T}}
\newcommand{\st}{*}
\newcommand{\PP}{\mathbb{P}}
\newcommand{\rin}{\right\rangle}
\newcommand{\SSS}{\mathbb{S}}
\newcommand{\forces}{\Vdash}
\newcommand{\supp}{\text{supp}}
\newcommand{\dom}{\text{dom}}
\newcommand{\osc}{\text{osc}}
\newcommand{\F}{\mathcal{F}}
\newcommand{\A}{\mathcal{A}}
\newcommand{\B}{\mathcal{B}}
\newcommand{\D}{\mathcal{D}}
\newcommand{\I}{\mathcal{I}}
\newcommand{\X}{\mathcal{X}}
\newcommand{\Y}{\mathcal{Y}}
\newcommand{\CC}{\mathcal{C}}
\newcommand{\Car}{\mathfrak{1}}
\newcommand{\sss}{\rm SSW}
\newcommand{\sw}{\rm SW}
\newcommand{\n}{{2, \A}}

\thanks{The research of the authors was supported by the NCN (National Science
Centre, Poland) research grant no.\ 2020/37/B/ST1/02613.}

\subjclass[2010]{46B20, 03E75, 46B26, 03E35}
\title[Equilateral and separated sets]
{Equilateral and separated sets in some  Hilbert generated Banach spaces }

\author{Piotr Koszmider}
\address{Institute of Mathematics, Polish Academy of Sciences,
ul. \'Sniadeckich 8,  00-656 Warszawa, Poland}
\email{\texttt{piotr.math@proton.me}}

\author{Kamil Ryduchowski}
\address{Faculty of Mathematics, Informatics and Mechanics, 
University of Warsaw, ul. Banacha 2, 02-097 Warszawa, Poland}
\address{Institute of Mathematics, Polish Academy of Sciences,
ul. \'Sniadeckich 8,  00-656 Warszawa, Poland}
\email{\texttt{kryduchowski@impan.pl}}

\begin{abstract} 
We study Hilbert generated versions of nonseparable Banach spaces $\X$ considered by Shelah, Stepr\=ans 
and Wark where the behavior of the norm on nonseparable subsets is so irregular that it does not allow
 any linear bounded operator on $\X$ other than 
a diagonal operator (or a scalar multiple of the identity) plus a separable range operator.
We address the questions if  these spaces
admit uncountable equilateral sets and if their unit spheres admit uncountable
$(1+)$-separated or $(1+\varepsilon)$-separated sets. 
 We resolve some of the above questions for  two types of these spaces  by showing
both absolute and undecidability results. 

The corollaries are that the continuum  hypothesis (in fact: the existence of a nonmeager
set of reals of the first uncountable cardinality)  implies the existence of
an equivalent renorming of the nonsepareble Hilbert space $\ell_2(\omega_1)$ which does
not admit any uncountable equilateral set and it implies the existence of 
 a nonseparable Hilbert generated Banach space containing an isomorphic copy of
 $\ell_2$ in each nonseparable subspace,  whose unit sphere does not admit
an uncountable equilateral set and does not admit an uncountable $(1+\varepsilon)$-separated set for any $\varepsilon>0$.
This could be compared with a recent result by H\'ajek, Kania and Russo 
saying that all nonseparable reflexive Banach spaces  admit uncountable
 $(1+\varepsilon)$-separated sets in their unit spheres.

\end{abstract}

\maketitle

\section{Introduction}

 \begin{definition}\label{def-sep} Let $\X$ be a Banach space and $\delta>0$.
\begin{itemize}
\item $\Y\subseteq \X$ is $(\delta+)$-separated if $\|y-y'\|>\delta$ for distinct $y, y'\in \Y$.
\item $\Y\subseteq \X$ is $\delta$-separated\footnote{By the classical Riesz lemma for any $\varepsilon>0$ 
any infinite dimensional 
Banach space $\X$ of density $\kappa$
admits a $(1-\varepsilon)$-separated set $\Y$ of the unit sphere of $\X$ of cardinality $\kappa$.}
 if $\|y-y'\|\geq\delta$ for distinct $y, y'\in \Y$.
\item $\Y\subseteq \X$ is $\delta$-equilateral\footnote{Note that    a Banach space admits 
 an  equilateral set of an infinite cardinality $\kappa$ if and only if its unit sphere
admits a $1$-equilateral set of cardinality $\kappa$ 
(scale the original equilateral set and translate one of its points to $0$).} if $\|y-y'\|=\delta$ for distinct $y, y'\in \Y$. 
\end{itemize}
\end{definition}
Given a Banach space $\X$ of a prescribed class much research has been done attempting
to determine
the existence of large equilateral sets of $\X$ or  large $(1+)$-separated  or $(1+\varepsilon)$-separated sets of the unit sphere 
of $\X$ denoted by $S_\X$.  These  questions are of isometric nature but many results depend on the isomorphic class
of the Banach space.

If $\X$ is separable (improving a theorem of Kottman \cite{kottman}), Elton and Odell proved that $S_\X$ contains
an infinite $(1+\varepsilon)$-separated set for some $\varepsilon>0$ (\cite{elton-odell}). 
If $\X$ is nonreflexive $\varepsilon$ can be assumed to be any number smaller than $\sqrt[5]{4}-1$ by a 
result of Kryczka and Prus (\cite{kryczka-prus})
and it can be estimated for $\X$ being uniformly convex (\cite{neerven}).
Mercourakis and Vassiliadis proved that
any Banach space containing an isomorphic copy of $c_0$ admits an infinite
equilateral set and Freeman, Odell, Sari, and Schlumprecht proved that all uniformly smooth Banach spaces
admit an infinite
equilateral sets. However,
Terenzi in \cite{terenzi}  gave an equivalent renorming of $\ell_1$
which does not admit any infinite equilateral set.  It should be added that every infinite dimensional 
Banach space has an equivalent renorming which admits an infinite equilateral set (\cite{mer-pams}).

To investigate the existence of uncountable  sets in $S_\X$ as in Definition \ref{def-sep} depending
on the properties of the Banach space $\X$ one needs to move
to nonseparable  Banach spaces, as all these sets are  norm discrete.
Here an uncountable version of the Elton-Odell
theorem fails as witnessed by $c_0(\omega_1)$ (\cite{elton-odell}).
Also for such spaces an uncountable version of the Kottman's theorem fails
 (\cite{pk-kottman}) or even the unit spheres can be unions of countably
many sets of diameters strictly less than $1$ (\cite{pk-ad}).  On a more positive
side Kania and Kochanek  (\cite{tt}) have proved that the unit spheres of 
all nonseparable spaces of the form $C(K)$ for $K$ compact Hausdorff
admit an uncountable $(1+)$-separated set but whether they
admit an uncountable  $(1+\varepsilon)$-separated or
an uncountable equilateral set turned out to be  undecidable (\cite{pk-ck}).
Also equivalent renormings of $\ell_1([0,1])$ without infinite
equilateral sets and of $C_0(X)$ spaces  for $X$ locally compact Hausdorff with no uncountable equilateral sets have been
constructed in \cite{pk-wark, pk-kottman}.

However, we have a strong isomorphic positive result of H\'ajek, Kania and Russo saying that all
reflexive nonseparable Banach spaces do admit uncountable
$(1+\varepsilon)$-separated sets.  Also no reflexive nonseparable Banach space with no
(infinite) uncountable equilateral set has been  known so far.  
The following is
our main result which sheds some further light on the relation between the reflexivity and 
uncountable equilateral and separated sets.

\begin{theorem}\label{main}
Suppose that there is a nonmeager set of reals
of the first uncountable cardinality $\omega_1$ (e.g., assume the continuum hypothesis).
\begin{enumerate} 
\item There is an equivalent renorming of $\ell_2(\omega_1)$ which does not
admit an uncountable equilateral set.
\item There is a nonseparable Hilbert generated Banach space
where every nonseparable subspace contains an isomorphic copy of $\ell_2$,
whose unit sphere does not admit an uncountable equilateral set nor 
an uncountable $(1+\varepsilon)$-separated set for any $\varepsilon>0$.
\end{enumerate}
\end{theorem}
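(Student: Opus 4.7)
The plan is to extract a combinatorial gadget from the hypothesized nonmeager set $A=\{r_\alpha:\alpha<\omega_1\}\subseteq 2^\omega$, use it to introduce a Shelah-Stepr\=ans-Wark style perturbation of a Hilbertian norm, and then apply a $\Delta$-system analysis to rule out the desired uncountable configurations on the unit sphere. The nonmeagerness of $A$ yields, for every $\omega_1$-sequence of Borel-definable obstructions, an uncountable $I\subseteq\omega_1$ on which the $r_\alpha$ realize a prescribed compatibility; this is the standard device by which ``nonmeager of size $\omega_1$'' replaces CH in such constructions.

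For part (1), I would define the equivalent norm on $\ell_2(\omega_1)$ by
\[
\|x\|=\Bigl(\|x\|_2^2+\sum_n c_n\,\varphi_{n,A}(x)^2\Bigr)^{1/2},
\]
where each $\varphi_{n,A}$ is a bounded functional supported on a finite subset of $\omega_1$ selected through $A$ (for instance of the form $\sum_{\gamma\in F}\pm x(\gamma)$) and the $c_n$ decay fast enough that the new norm is equivalent to $\|\cdot\|_2$; Hilbert generation is then automatic. Given a putative uncountable equilateral family $\{x_\alpha:\alpha<\omega_1\}\subseteq S_\X$, I would apply the $\Delta$-system lemma to the supports, pass to an uncountable subfamily in which the vectors agree on the common root $R$ up to a small error and the branches are indexed through elements of $A$, and use the extraction lemma to find pairs $\alpha<\beta$ realizing incompatible cancellation patterns in the values $\varphi_{n,A}(x_\alpha-x_\beta)$. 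This forces $\|x_\alpha-x_\beta\|$ to take at least two distinct values, the required contradiction.

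For part (2), $\ell_2(\omega_1)$ itself cannot serve (its standard basis is already $\sqrt{2}$-equilateral, and no equivalent renorming can force all uncountable subsets of $S_\X$ to cluster near a single distance). I would instead take a Hilbert generated $\X$, realized as a suitable closed subspace of some $\ell_2(\Gamma)$, in which the same combinatorial gadget is deployed along a richer system of supports so that the perturbation drives the distances $\|x_\alpha-x_\beta\|$ within any uncountable subfamily towards $1$ from above, defeating $(1+\varepsilon)$-separation for every $\varepsilon>0$ simultaneously with equilaterality. To guarantee that every nonseparable subspace contains a copy of $\ell_2$, I would preserve enough of the ambient Hilbertian structure inside every branch of the gadget, so that after the $\Delta$-system stabilization the surviving uncountable subfamily still spans an $\ell_2$-like subspace.

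The main obstacle will be this final calibration: engineering the functionals $\varphi_{n,A}$, their $A$-indexed supports, and the weights $c_n$ so that a single combinatorial input yields both genuine variation in the distances $\|x_\alpha-x_\beta\|$ (which blocks equilaterality) and, in part (2), tight clustering of those same distances near $1$ (which blocks $(1+\varepsilon)$-separation), all while simultaneously preserving norm equivalence, Hilbert generation, and the presence of $\ell_2$ in every nonseparable subspace. Reconciling these apparently opposite requirements, variation versus tightness, from one gadget extracted from the nonmeager set of reals is where the real difficulty of the proof lies.
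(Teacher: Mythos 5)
Your overall strategy (extract a Ramsey-type gadget from the nonmeager set, perturb a Hilbertian norm by combinatorially chosen finite-support data, and run a $\Delta$-system analysis to force two distinct distances) matches the paper's in spirit, but both of your concrete constructions fail at the decisive point. In part (1) your norm adds $\sum_n c_n\,\varphi_{n,A}(x)^2$ over \emph{countably} many functionals, each supported on a finite subset of $\omega_1$; hence $\bigcup_n\supp(\varphi_{n,A})$ is countable, and any uncountable family of pairwise disjointly supported unit vectors has an uncountable subfamily whose supports avoid it entirely. On that subfamily your norm coincides with $\|\cdot\|_2$, so after normalization you get an uncountable $\sqrt2$-equilateral set: the perturbation is invisible exactly where it is needed. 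The paper avoids this by using an \emph{uncountable} family $\A$ of finite subsets of $\omega_1$ (closed under subsets, covering $\omega_1$) and the sup-type quantity $\|x\|_\A=\sup_{A\in \A}\bigl(\sum_{\alpha\in A}x(\alpha)^2\bigr)^{1/2}$ added to $\|\cdot\|_2$; the nonmeager hypothesis enters only through Proposition \ref{nonmeager}, which produces a \emph{strong $T$-coloring} and hence a family $\A$ such that every uncountable pairwise disjoint family of finite sets contains one pair $F_\xi,F_\eta$ with $(F_\xi\otimes F_\eta)\cap\A=\emptyset$ (forcing $\|x_\xi-x_\eta\|_\A=r$) and another pair whose optimal sets unite inside $\A$ (forcing $\sqrt2\,r$). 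Two forced distances is exactly your intended contradiction, but obtaining it genuinely requires the supremum over an uncountable $\A$, not a weighted countable sum of linear functionals.

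For part (2) the proposal is not repairable as stated: a closed subspace of $\ell_2(\Gamma)$ under an equivalent norm is isomorphic to a Hilbert space, hence reflexive, and by the H\'ajek--Kania--Russo theorem quoted in the introduction its unit sphere \emph{does} admit an uncountable $(1+\varepsilon)$-separated set for some $\varepsilon>0$. The paper instead takes the completion of $c_{00}(\omega_1)$ under $\|\cdot\|_\A$ \emph{alone} (no $\ell_2$ summand); this space $\X_\A$ is Hilbert generated because the identity from $\ell_2(\omega_1)$ into it is bounded with dense range, but it is not Hilbertian or reflexive, and the same two-distance lemma shows both that no uncountable set is equilateral and that every uncountable $(1-\varepsilon)$-separated subset of the sphere contains a pair at distance $<1+\delta$, which kills $(1+\varepsilon)$-separation. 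The $\ell_2$-saturation is then obtained not by ``preserving ambient Hilbertian structure'' but by a separate argument: WLD-ness yields uncountably many disjointly supported vectors in any nonseparable subspace, and the Erd\H{o}s--Dushnik--Miller theorem applied to the coloring ``$B_\xi\cup B_\eta\in\A$'' produces an infinite subfamily with upper and lower $\ell_2$-estimates. You would need to replace the ``subspace of $\ell_2(\Gamma)$'' ansatz by a non-reflexive Hilbert generated space of this kind for part (2) to have any chance.
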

\begin{proof}
For (1) use Propositions \ref{hilbert}, \ref{nonmeager} and \ref{hilbert-equi}. For (2) use Theorem \ref{main-ssw} (1).
\end{proof}
 Recall that a Banach space $\X$  is Hilbert generated if there is a bounded linear operator from a Hilbert space
into $\X$ which has dense range.  These are exactly the  Banach spaces whose duals balls are uniform Eberlain compacta
with respect to weak$^*$-topology. See, e.g., \cite{ug, integration, hilbert-gen, hajek-adv} for 
a wide range of results on this class of spaces. Note that the property of having a copy of $\ell_2$ in every
nonseparable subspace is related to being somewhat reflexive (\cite{somewhat}).
 We also note that all consistent or absolute examples of nonseparable Banach spaces
 not admitting uncountable equilateral sets presented in the literature  so far were not    even WLD\footnote{
Spaces of \cite{pk-ck} are $C(K)$ for $K$ separable and spaces of 
\cite{pk-kottman} and \cite{pk-ad} are 
isomorphic to a $C(K)$ for $K$ separable, while  nonseparable WLD 
space cannot be isomorphic to a $C(K)$ space for $K$ separable since
separable Corson compacta are metrizable. The spaces of \cite{pk-wark} are renormings of $\ell_1([0,1])$
hence not WLD as well.}, a class much wider than Hilbert generated spaces. We do not know if  the statements
of Theorem~\ref{main}
 can be obtained  without any additional set-theoretic hypothesis  but 
 we prove in Theorem \ref{main-ssw} (3)  that such a hypothesis
 cannot be removed  in the case of  the concrete class of  spaces ($\sw$-spaces)
 where our consistent example from Theorem \ref{main} (2) belongs.

Our approach to the question of the existence of uncountable sets  as in Definition \ref{def-sep}
in reflexive or ``close to reflexive'' Banach spaces was to examine two types of concrete
classes of Banach spaces considered first   by Shelah in \cite{shelah-graph} and by Shelah and Stepr\=ans  in \cite{ss}
and later modified by Wark \cite{wark, wark-convex}.  We call Banach spaces  of these classes $\sw$-spaces
and $\sss$-spaces (see Section 2 for definitions). This paper is entirely devoted to these spaces
and Theorem \ref{main} is a corollary to the results concerning them.
The reason we started to investigate these
spaces  in the above context  is the ``chaotic'' behavior of the norm  on nonseparable sets (which is
responsible for the rigidity of the spaces in the sense of having few operators -- the original interest in these spaces)
 and the fact that on the other hand, Wark has obtained  their Hilbert generated
 versions (and through the interpolation method, even reflexive and uniformly convex versions).

More specifically, Theorem \ref{main} is
a consequence of item (1) of the following result whose remaining parts show some limitations on how  
$\sw$-spaces and $\sss$-spaces can serve as 
(counter)examples in the above context.

\begin{theorem}\label{main-ssw} $ $
\begin{enumerate}
\item The existence of a nonmeager set of reals
of the first uncountable cardinality $\omega_1$  implies that there is an $\sw$-space where every nonseparable subspace contains
an isomorphic copy of $\ell_2$, which does not admit
any uncountable equilateral set
and whose unit sphere does not admit
an uncountable $(1+\varepsilon)$-separated set for any $\varepsilon>0$.
\item The unit spheres of all $\sw$-spaces and all $\sss$-spaces admit uncountable $(1+)$-separated sets.
\item Martin's axiom and the negation of the continuum hypothesis imply that the unit spheres of 
all $\sss$-spaces admit an uncountable 
$(1+\varepsilon)$-separated set for some $\varepsilon>0$ and 
that all unit spheres of $\sw$-spaces admit uncountable $\sqrt 2$-equilateral sets.
\item There is an  $\sw$-space and an $\sss$-space whose unit spheres admit uncountable $\sqrt 2$-equilateral sets
and ${{\sqrt2+\sqrt5}\over{\sqrt2+1}}$-equilateral sets, respectively.

\end{enumerate}
\end{theorem}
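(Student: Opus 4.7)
For part (1), the main technical task, I would construct the desired $\sw$-space by transfinite recursion of length $\omega_1$. An $\sw$-space is determined by a combinatorial object indexed by $\omega_1$---in Wark's Hilbert-generated setting, a sequence $(x_\alpha)_{\alpha<\omega_1}$ of generating vectors in an auxiliary Hilbert-type space together with norm-defining constraints. At stage $\alpha$, having fixed $\{x_\beta:\beta<\alpha\}$, I would enumerate the countably many \emph{threats}: finite tuples of rational combinations of the earlier generators which could serve as initial segments of a putative uncountable equilateral set of any diameter, or of a putative uncountable $(1+\varepsilon)$-separated set for $\varepsilon$ in a fixed countable dense subset of $(0,1)$. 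Each such threat corresponds to a meager set of choices for $x_\alpha$ that would allow the threat to be extended to an uncountable witness, and the union over the countably many threats at stage $\alpha$ remains meager. The nonmeager-$\omega_1$ hypothesis provides exactly what is needed to select, across all $\omega_1$ stages, values $x_\alpha$ avoiding these meager sets. After $\omega_1$ steps, the Hilbert-generatedness and the $\ell_2$-in-every-nonseparable-subspace property are automatic from the $\sw$ framework of Wark, while the diagonalization guarantees the nonexistence of uncountable equilateral and $(1+\varepsilon)$-separated sets.

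For part (2), every $\sw$- and $\sss$-space has a canonical uncountable generating family $(x_\alpha)_{\alpha<\omega_1}$ contained in its unit sphere, and inspection of the defining norms yields $\|x_\alpha-x_\beta\|>1$ for all $\alpha\neq\beta$, so $(x_\alpha)$ is an uncountable $(1+)$-separated subset of $S_\X$. Part (3) upgrades this under MA$+\neg$CH: given the generating family from (2), one reduces to an uncountable $\Delta$-system and applies a standard ccc-coloring or ccc-forcing argument (in the style of classical MA applications) to extract an uncountable monochromatic subfamily on which pairwise differences have norms bounded uniformly below by $1+\varepsilon$ (for $\sss$-spaces) or constantly equal to $\sqrt 2$ (for $\sw$-spaces, where the defining oscillation structure admits $\sqrt 2$ as the only natural limit value). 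Part (4) is explicit: I would construct specific $\sw$- and $\sss$-spaces using well-chosen combinatorial data (e.g.\ suitable almost disjoint families or coloring sequences on $\omega_1$) for which the norms of differences of canonical generators can be computed directly and come out to $\sqrt 2$ and $(\sqrt 2+\sqrt 5)/(\sqrt 2+1)$ respectively.

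The principal difficulty lies in part (1): one must simultaneously diagonalize against uncountable equilateral sets of \emph{all} diameters and against uncountable $(1+\varepsilon)$-separated sets for every $\varepsilon>0$, and these two demands interact, since reducing certain norms to suppress an equilateral candidate can create new $(1+\varepsilon)$-separated candidates. The key technical step will be to verify that at each recursion stage the set of ``bad'' parameter values for $x_\alpha$---those that would allow any threat to persist---is indeed a countable union of meager sets in a suitable separable parameter space, so that the nonmeager-$\omega_1$ hypothesis can be invoked to choose good values uniformly across all $\omega_1$ stages without ever derailing the $\sw$-axioms (in particular, preserving the rigidity and Hilbert-generatedness conditions).
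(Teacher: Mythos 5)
Your proposal for part (2) contains a concrete error: the ``canonical generating family'' $\{\Car_{\{\alpha\}}:\alpha<\omega_1\}$ is \emph{not} $(1+)$-separated in an $\sw$-space. Indeed $\|\Car_{\{\alpha\}}-\Car_{\{\beta\}}\|_\A=\sqrt2$ if $\{\alpha,\beta\}\in\A$ and $=1$ otherwise, and since $\A\cap\B\subseteq[\omega_1]^1\cup\{\emptyset\}$ while every uncountable set contains a pair in $\B$, every uncountable subfamily of the canonical basis contains a pair at distance exactly $1$. The actual argument requires building an uncountable family of maximal ``$\A$-free'' finite or countable sets $A_\alpha$ and exploiting maximality to force, for $\alpha<\beta$, a pair $\{\xi,\min A_\beta\}\in\A$ with $\xi\in A_\alpha$, which is what pushes the distance strictly above $1$; the $\sss$-case needs a further stabilization of the oscillation seminorm $\nu_\D$ on rational values. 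Similarly in (3) the MA argument is not a monochromatic-subfamily extraction: one shows the poset of finite $\A$-free sets \emph{cannot} be ccc (else $\sigma$-centredness would give an uncountable set with no pair in $\A$, contradicting the $T$-family property), and an uncountable antichain, after a $\Delta$-system refinement, \emph{is} the $\sqrt2$-equilateral set.

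The more serious gap is in (1). Your stage-by-stage diagonalization against ``threats'' does not work as described: a putative uncountable equilateral or $(1+\varepsilon)$-separated set is not visible as a countable list of finite initial segments at any stage $\alpha<\omega_1$, and what must actually be defeated is a Ramsey-type statement quantifying over \emph{all} uncountable pairwise disjoint families of finite subsets of $\omega_1$. The route the paper takes is to isolate the right combinatorial object, a \emph{strong} $T$-coloring $c$ (for every uncountable disjoint family $\{A_\xi\}$ of finite sets there are $\xi<\eta$ with $c_0[A_\xi\otimes A_\eta]=\{0\}$ and $\xi<\eta$ with $c_0[A_\xi\otimes A_\eta]=\{1\}$), whose existence under the nonmeager hypothesis is a quoted result of Kojman--Rinot--Stepr\=ans; the Banach space is then $\X_{\A_c}$ and the analysis is a deterministic consequence: any uncountable subset of the sphere reduces (rational finite approximation plus $\Delta$-system) to disjointly supported vectors of a common norm $r$, and the strong property produces one pair at distance exactly $\sqrt2 r$ and another at distance exactly $r\leq1$, killing both equilateral and $(1+\varepsilon)$-separated sets simultaneously. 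Your claim that the $\ell_2$-in-every-nonseparable-subspace property is ``automatic from the $\sw$ framework'' is also unjustified: the paper proves it only for strong $T_0$-families, using WLD-ness of Hilbert generated spaces to extract disjointly supported vectors and then the Erd\H{o}s--Dushnik--Miller theorem together with the strong property to find an infinite subfamily spanning a copy of $\ell_2$. Without identifying the strong coloring as the intermediate object and the two-outcome distance lemma as the key analytic step, the proposal does not yield a proof.
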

\begin{proof}
For (1) use Propositions \ref{equi-sep}, \ref{somewhat}.
For (2) use
Proposition \ref{1+}.
For (3) use Proposition \ref{ma-equi}. For (4) use Proposition \ref{zfc-equi}.
\end{proof}

The structure of the paper is as follows. Next Section 2 contains preliminaries. Section 3
is devoted to proving items (2) - (4) of Theorem \ref{main-ssw}. Section 4 includes the 
propositions needed for  Theorem \ref{main-ssw} (1). The last Section 5 lists some
questions.

The first uncountable cardinal is denoted by $\omega_1$.
The set
of all $x\in \R^{\omega_1}$ such that $\{\alpha<\omega_1: x(\alpha)\not=0\}$ is at most finite
is denoted by $c_{00}(\omega_1)$. 
If $\X$ is a Banach space, 
$B_\X$ denotes the unit ball of $\X$ and 
$S_\X$ denotes the unit sphere  of $\X$.
By a subspace of a Banach space we always mean a closed linear subspace.
For $x\in \R^{\omega_1}$ the set 
$\supp(x)=\{\alpha\in \omega_1: x(\alpha)\not=0\}$ 
is called the support of $x$. The symbol
$\Car_A$ stands for the characteristic function of $A$. If $A, B$ are disjoint sets,
then $A\otimes B=\{\{\alpha, \beta\}: \alpha\in A, \beta\in B\}$.
If $X$ is a set,  $[X]^{<\omega}$ 
denotes the family of all finite subsets of $X$ and $[X]^n$ denotes the family of all $n$-element subsets of $X$.

\section{Families, norms, colorings and axioms}

\begin{definition}\label{def-normA} Let $\A$ be a family of finite subsets of $\omega_1$ which  is closed under taking subsets
and contains all singletons. We define the norm $\|\ \|_\A$ on $c_{00}(\omega_1)$
by
$$\|x\|_\A=\sup_{A\in \A}\sqrt{\sum_{\alpha\in A}x(\alpha)^2}$$
for all $x\in c_{00}(\omega_1)$. The completion of $c_{00}(\omega_1)$ with respect to this norm
is denoted $(\X_\A, \|\ \|_\A)$. 
\end{definition}

One can prove by standard methods that the set of all $x\in \R^{\omega_1}$ such that $\|x\|_\A$ is finite
is a complete space with respect to $\|\ \|_\A$ and so we have a concrete representation of
$\X_\A$ as the closure of $c_{00}(\omega_1)$ with respect to this norm as in \cite{ss}.  We will work with this representation.
When $\omega_1$ is replaced by $\N$ and the $\ell_2$ norm is replaced by $\ell_1$ norm such spaces
are called combinatorial Banach spaces or generalized Schreier spaces and can be traced back
to Schreier's paper \cite{schreier}. The first use of $\omega_1$ instead of $\N$ seems
to be \cite{shelah-graph} and the first use of the $\ell_2$ norm in this nonseparable context 
seems to be \cite{wark}.  The $\ell_2$ is more cumbersome in calculations but has some better
properties:

\begin{proposition}\label{hilbert}
Suppose that $\A$ is as in Definiton \ref{def-normA}. Then $(\X_\A, \|\ \|_\A)$ is a Hilbert
generated Banach space and the norm $\|\ \|_\n = \|\ \|_2 + \|\ \|_\A$ is an equivalent norm on the Hilbert space 
$(\ell_2(\omega_1), \|\ \|_2)$. 
\end{proposition}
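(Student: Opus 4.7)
The whole proposition will rest on the single pointwise inequality
\[
\|x\|_\A \;\le\; \|x\|_2 \qquad (x\in c_{00}(\omega_1)),
\]
which is immediate: for every finite $A\in\A$ one has $\sum_{\alpha\in A}x(\alpha)^2\le\sum_{\alpha<\omega_1}x(\alpha)^2=\|x\|_2^2$, so taking square roots and suprema gives the claim. Note that the very same estimate makes sense for arbitrary $x\in\R^{\omega_1}$ (each $A$ is finite, so $\sum_{\alpha\in A}x(\alpha)^2$ is always a finite sum), hence in particular $\|x\|_\A\le\|x\|_2<\infty$ for every $x\in\ell_2(\omega_1)$.

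For the Hilbert generation, I would use this to show that the formal identity on $c_{00}(\omega_1)$ extends to a bounded linear operator $T\colon\ell_2(\omega_1)\to\X_\A$ of norm at most $1$. Concretely, since $c_{00}(\omega_1)$ is dense in $\ell_2(\omega_1)$ with respect to $\|\ \|_2$ and $\|\ \|_\A\le\|\ \|_2$ on this dense subspace, any $\|\ \|_2$-Cauchy net in $c_{00}(\omega_1)$ is also $\|\ \|_\A$-Cauchy, so $T$ is well-defined and bounded; alternatively, one notes that $\ell_2(\omega_1)\subseteq\X_\A$ as subsets of $\R^{\omega_1}$ (the inclusion being provided by the paragraph above together with the concrete realization of $\X_\A$ from \cite{ss}), and $T$ is just the inclusion map. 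The range of $T$ contains $c_{00}(\omega_1)$, which is dense in $\X_\A$ by the very definition of $\X_\A$ as the $\|\ \|_\A$-completion of $c_{00}(\omega_1)$; hence $T$ has dense range and $\X_\A$ is Hilbert generated.

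For the equivalent renorming assertion, the same inequality gives at once
\[
\|x\|_2\;\le\;\|x\|_2+\|x\|_\A\;=\;\|x\|_{2,\A}\;\le\;2\|x\|_2
\]
for every $x\in\ell_2(\omega_1)$, and $\|\ \|_{2,\A}$ is clearly a norm (a sum of a norm and a seminorm on $\ell_2(\omega_1)$). Thus $\|\ \|_{2,\A}$ is equivalent to the Hilbert norm on $\ell_2(\omega_1)$. There is essentially no obstacle here: the only point one must be careful about is verifying that $\|\ \|_\A$ is finite on all of $\ell_2(\omega_1)$, but this is handled by the finiteness of the sets $A\in\A$ in the very first paragraph.
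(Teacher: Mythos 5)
Your proof is correct and follows essentially the same route as the paper's: the key inequality $\|x\|_\A\le\|x\|_2$ yields both the boundedness of the identity (inclusion) operator $\ell_2(\omega_1)\to\X_\A$ with dense range and the two-sided estimate $\|\ \|_2\le\|\ \|_{\n}\le 2\|\ \|_2$. Your extra care about extending the identity from the dense subspace $c_{00}(\omega_1)$ is a harmless elaboration of what the paper leaves implicit.
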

\begin{proof}
Note that for any $x\in c_{00}(\omega_1)$ we have
$\|x\|_\infty \leq \|x\|_\A \leq \|x\|_2$ and so $\|\ \|_2\leq \|\ \|_2 + \|\ \|_\A\leq 2 \|\ \|_2$
and hence $\|\ \|_\n$ and $\|\ \|_2$ are equivalent on $\ell_2(\omega_1)$.
Also the identity operator $\ell^2(\omega_1) \rightarrow \X_\A$ is bounded and has  dense image
and so $\X_\A$ is a Hilbert generated Banach space. 
\end{proof}

\begin{definition} Suppose that $A\subseteq\omega_1$. $P_A\colon\X_\A\rightarrow \X_\A$ is defined by 
$$
  P_A(x)(\eta) =
  \begin{cases}
    x(\eta) &  \text{if} \ \eta\in A \\
   0 &  \text{if}\  \eta\not\in A
  \end{cases}
$$
\end{definition}

\begin{lemma}\label{projection-norm} Suppose that $\A$ is  as in Definition \ref{def-normA}.
Suppose that $\emptyset \neq A\subseteq B\subseteq\omega_1$ and $x\in \X_\A$. Then
$\|P_A(x)\|_{\A}\leq \|P_B(x)\|_{\A}$. In particular, $P_A$ is a projection of norm one on $\X_\A$.
\end{lemma}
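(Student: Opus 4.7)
The plan is to reduce the inequality to a direct comparison of the suprema defining $\|P_A(x)\|_\A$ and $\|P_B(x)\|_\A$, exploiting the hypothesis that $\A$ is closed under taking subsets.

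First I would unfold the definitions. For any $C \in \A$ we have
$$\sum_{\alpha \in C} P_A(x)(\alpha)^2 = \sum_{\alpha \in C \cap A} x(\alpha)^2,$$
and since $\A$ is closed under subsets, $C \cap A \in \A$. Hence the supremum in $\|P_A(x)\|_\A$ may be restricted to those $C \in \A$ with $C \subseteq A$, giving
$$\|P_A(x)\|_\A = \sup_{\substack{C \in \A \\ C \subseteq A}} \sqrt{\sum_{\alpha \in C} x(\alpha)^2},$$
and similarly for $B$ in place of $A$. Since $A \subseteq B$, every $C \in \A$ with $C \subseteq A$ also satisfies $C \subseteq B$, so the supremum over the smaller family is at most the supremum over the larger one. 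This gives $\|P_A(x)\|_\A \leq \|P_B(x)\|_\A$.

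For the ``in particular'' clause, note that $P_A$ is plainly linear and idempotent on $c_{00}(\omega_1)$, and taking $B = \omega_1$ in the inequality just proved yields $\|P_A(x)\|_\A \leq \|x\|_\A$ for every $x \in c_{00}(\omega_1)$. Hence $P_A$ is bounded of norm at most one on a dense subspace of $\X_\A$ and extends uniquely to a projection on $\X_\A$ of norm at most one (the coordinate-wise formula still describes this extension, since convergence in $\|\ \|_\A$ implies coordinate-wise convergence via $\|\ \|_\infty \leq \|\ \|_\A$). To see that the norm equals one, pick any $\alpha \in A$ (using $A \neq \emptyset$); then $\Car_{\{\alpha\}} \in c_{00}(\omega_1)$ has $\|\Car_{\{\alpha\}}\|_\A = 1$ since $\{\alpha\} \in \A$, and it is fixed by $P_A$.

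No step looks genuinely hard; the only subtlety worth flagging is the need to invoke closure of $\A$ under subsets to restrict the defining supremum, and to justify extending $P_A$ from $c_{00}(\omega_1)$ to the completion $\X_\A$ in a way that still coincides with the coordinate-wise restriction.
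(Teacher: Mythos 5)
Your proof is correct; note that the paper states Lemma \ref{projection-norm} without any proof, treating it as routine, and your argument (restricting the defining supremum using closure of $\A$ under subsets, then comparing the index sets $\{C\in\A: C\subseteq A\}\subseteq\{C\in\A: C\subseteq B\}$, plus the singleton witness for norm exactly one) is precisely the standard verification the authors leave to the reader. The only remark worth adding is that since the paper works with the concrete representation of $\X_\A$ as functions on $\omega_1$ of finite $\|\cdot\|_\A$-norm, the supremum comparison applies verbatim to every $x\in\X_\A$, so the density/extension step is needed only to confirm that $P_A$ maps the closure of $c_{00}(\omega_1)$ into itself --- which your observation $\|P_A(x_n)-P_A(x)\|_\A\leq\|x_n-x\|_\A$ for approximating $x_n\in c_{00}(\omega_1)$ already handles.
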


Now we recall the definitions concerning the other type of norm we will consider which
in the separable setting can be traced back to James (\cite{james}).
By a set of consecutive pairs we mean a set $D$ of doubletons of $\omega_1$  such that
$\max(a)<\min(b)$ or $\max(b)<\min(a)$ for any distinct $a, b\in D$.

\begin{definition}\label{def-nu} Let $\D$ be a  family of finite sets of consecutive pairs containing all single pairs of $\omega_1$.
For $x\in c_{00}(\omega_1)$ define the norm $\nu_\D$ on $c_{00}(\omega_1)$ by
$$\nu_\D(x)=\sup_{D\in \D}\sqrt{\sum_{\{\alpha, \beta\}\in D}|x(\alpha)-x(\beta)|^2}$$
for all $x\in c_{00}(\omega_1)$.
If $\A$ is as in Definition \ref{def-normA}, then the completion of $c_{00}(\omega_1)$
 with  respect to the norm $\|\ \|_{\A,\D}$ defined as $\nu_\D+\|\ \|_\A$
 is denoted by $(\X_{\A, \D}, \|\ \|_{\A, \D})$. 

\end{definition}
The condition requiring all single pairs to be in $\D$ guarantees that $\nu$ is not only a seminorm but
is a norm on $c_{00}(\omega_1)$.  Such seminorms seem to be first considered
in the nonseparable context in \cite{ss} (in fact, the concrete $\nu$ considered in \cite{ss} is a norm due to the above
property).  As in the case of $\X_\A$ we will work with the concrete representation of
$\X_{\A, \D}$, namely, the closure of $c_{00}(\omega_1)$ with respect to $\|\ \|_{\A, \D}$ as in \cite{ss}.

\begin{proposition}Let $\A$ and $\D$ be as in Definitions \ref{def-normA} and \ref{def-nu}. Then
$(\X_{\A,\D}, \|\ \|_{\A, \D})$ is a Hilbert generated Banach space.
\end{proposition}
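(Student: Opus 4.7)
The plan is to mimic the proof of Proposition \ref{hilbert}: exhibit the identity (extended from $c_{00}(\omega_1)$) as a bounded linear operator from $\ell_2(\omega_1)$ with dense range into $\X_{\A,\D}$. Since $\|\cdot\|_{\A,\D}=\nu_\D+\|\cdot\|_\A$, and the estimate $\|x\|_\A\leq\|x\|_2$ for $x\in c_{00}(\omega_1)$ was already established in the proof of Proposition \ref{hilbert}, the task reduces to proving a comparable estimate $\nu_\D(x)\leq C\|x\|_2$ for some absolute constant $C$.

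The key observation is that for any finite set $D$ of consecutive pairs the doubletons $\{\alpha,\beta\}\in D$ are pairwise disjoint, so the ordinals that appear as endpoints of pairs in $D$ are all distinct. Combined with the elementary inequality $|x(\alpha)-x(\beta)|^2\leq 2(x(\alpha)^2+x(\beta)^2)$, this gives
\[
\sum_{\{\alpha,\beta\}\in D}|x(\alpha)-x(\beta)|^2 \;\leq\; 2\sum_{\{\alpha,\beta\}\in D}\bigl(x(\alpha)^2+x(\beta)^2\bigr) \;\leq\; 2\,\|x\|_2^2,
\]
so taking the supremum over $D\in\D$ and then the square root yields $\nu_\D(x)\leq\sqrt 2\,\|x\|_2$ for every $x\in c_{00}(\omega_1)$. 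Combined with $\|x\|_\A\leq\|x\|_2$, we get $\|x\|_{\A,\D}\leq(\sqrt 2+1)\|x\|_2$.

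From here the conclusion is formal. The identity $c_{00}(\omega_1)\to c_{00}(\omega_1)$ viewed as a map from $(c_{00}(\omega_1),\|\cdot\|_2)$ into $(\X_{\A,\D},\|\cdot\|_{\A,\D})$ is thus bounded, hence extends uniquely to a bounded linear operator $T\colon\ell_2(\omega_1)\to\X_{\A,\D}$. Its range contains $c_{00}(\omega_1)$, which by the very definition of $\X_{\A,\D}$ as the completion of $c_{00}(\omega_1)$ under $\|\cdot\|_{\A,\D}$ is norm-dense. Therefore $T$ has dense range and $\X_{\A,\D}$ is Hilbert generated.

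There is no real obstacle here; the only point to verify is the disjointness of the components of distinct pairs in a set of consecutive pairs, which is immediate from the definition (if $\max(a)<\min(b)$ then $a\cap b=\emptyset$). The rest is a one-line computation together with the already proven fact from Proposition \ref{hilbert}.
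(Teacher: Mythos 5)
Your proof is correct and follows essentially the same route as the paper's: the paper likewise notes that $\nu_\D(x)\leq 2\|x\|_2$ (you prove the slightly sharper constant $\sqrt{2}$ via disjointness of consecutive pairs) together with $\|x\|_\A\leq\|x\|_2$, and concludes that the identity $\ell_2(\omega_1)\to\X_{\A,\D}$ is bounded with dense image.
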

\begin{proof}
Note that for any $x\in c_{00}(\omega_1)$ and $\A$, $\D$ as in Definitions \ref{def-normA} and \ref{def-nu} we have
$\|x\|_\infty \leq \|x\|_\A \leq \|x\|_2$ and $\|x\|_\infty \leq \nu_D(x)\leq 2\|x\|_2$.
In particular the identity operator  $\ell^2(\omega_1) \rightarrow \X_{\A, \D}$ has  dense image
and so $\X_{\A,\D}$ is a Hilbert generated Banach space.
\end{proof}

In  \cite{shelah-graph} and \cite{ss}  Banach spaces
where the norm is defined using $\ell_1$ norm instead of $\ell_2$
as in our Definitions \ref{def-normA}, \ref{def-nu} are investigated
and  special properties of families $\A, \D$
 are considered.  It
 is shown there that for such special families $\A, \D$  the only linear bounded operators
on the first type of spaces are of the form diagonal with respect to the natural basis plus a separable range operator
and the only linear bounded operators
on the second  type of spaces are 
of the form scalar multiple of the identity   plus a separable range operator.
In \cite{wark} these proofs were modified so that similar results were proved, in our terminology, for
$\X_\A$  and $\X_{\A, \D}$ obtained from the special  families $\A, \D$.
The additional properties of $\A,  \D$ which guarantee few operators in the above sense are defined
(in our terminology) in the following two definitions which introduce two classes
of spaces which are the subject of this paper.

\begin{definition}\label{def-T-families}
Families $\A, \B$ of finite subsets of $\omega_1$ are called  $T$-families
if they are both  closed under taking subsets, $\bigcup\A=\omega_1$,
$\A\cap \B\subseteq[\omega_1]^1\cup\{\emptyset\}$ and for all
families $\{\{a(\alpha), b(\alpha)\}: \alpha<\omega_1\}$ 
of disjoint pairs of elements of $\omega_1$ and $k\in \N$ there are $\alpha_1<\dots<\alpha_k<\omega_1$
such that
\begin{itemize}
\item $\{a(\alpha_1), \dots a(\alpha_k)\}\in \A$,
\item$\{b(\alpha_1), \dots b(\alpha_k)\}\in\B$.
\end{itemize} 
A family  $\A$ of finite subsets of $\omega_1$ is called  a $T_0$-family if
there is a family $\B$ such that 
$\A, \B$ are $T$-families.
If $\A$ is a $T_0$-family then the Banach space $(\X_\A, \|\ \|_\A)$
is called an $\sw$-space.
\end{definition}

\begin{definition}\label{def-D}
A family $\D$ of sets of consecutive pairs is called 
nice with respect to $T$-families $\A, \B$ if 
\begin{enumerate}
\item Whenever $A\in {\mathcal{A}}\cup \B$ and
$D\in {\mathcal{D}}$, then $|A\cap(\bigcup D)|\leq 2$.
\item If $D, D'\in{\mathcal{D}}$ are distinct, then there may
be at most five pairs in $D$ which
 intersect other than itself  pair from $D'$, i.e.,
$$|\{a\in D: a\cap (\bigcup (D'\setminus \{a\}))\not=\emptyset \}|\leq 5 .$$
\item Whenever
$\{d_\xi\colon\xi<\omega_1\}\subseteq[X]^2$  is a collection of consecutive
pairs  and $k\in \N$, then there are $\xi_1<\ldots 
<\xi_k<\omega_1$ such that
$\{d_{\xi_i}\colon 1\leq i\leq k\}\in {\mathcal{D}}$.
\end{enumerate}
If $\A, \B$ are $T$-families and $\D$ is a nice family with respect to them,
then $(\X_{\A, \D}, \|\ \|_{\A, \D})$ is called an $\sss$-space. 
\end{definition}

The constructions of 
$T$-families and a nice family with respect to them 
in \cite{shelah-graph} and \cite{ss} are 
of the form
$\A_c, \B_c, \D_c$ for certain functions $c$ obtained by Todorcevic (\cite{acta}, see \cite{velleman} for a simple construction).
These notions are the subject of the following definition which provides a variety of typical
$\sw$-spaces and $\sss$-spaces.

\begin{definition}\label{T-coloring}
A function $c=(c_0,c_1)\colon [\omega_1]^2\rightarrow I\times J$, where $0,1\in I$ and $J\neq \emptyset$, is called a $T$-coloring
if given any uncountable pairwise disjoint collection $\{\{a_\xi(0), a_\xi(1)\}:\xi<\omega_1\}$ of pairs of $\omega_1$
and $(i_0, j_0), (i_1,j_1)\in I\times J$ there are $\xi<\eta<\omega_1$ such that
$c(\{a_\xi(0), a_\eta(0)\})=(i_0,j_0)$ and $c(\{a_\xi(1), a_\eta(1)\})=(i_1,j_1)$. We define
\begin{itemize}
\item $\A_c=\{a\in [\omega_1]^{<\omega}: c_0[[a]^2]\subseteq \{0\}\}$
\item $\B_c=\{a\in [\omega_1]^{<\omega}: c_0[[a]^2]\subseteq \{1\}\}$
\end{itemize}
In case of $I\times J=3\times[\omega_1]^{<\omega}$ we define $\D_c$ as the collection of 
all such consecutive pairs $\{\xi_1, \eta_1\}, \dots, \{\xi_k, \eta_k\}$ 
with $\xi_i<\eta_i$ for $1\leq i\leq k$ and some $k\in \N$ such that for every $1\leq i< j\leq k$ we have 
$$c(\{\xi_i, \xi_j\})=c(\{\eta_i, \eta_j\})=(2, \{\xi_l, \eta_l: l<i\}).$$
\end{definition}

Summing up the above discussion we have:

\begin{proposition}
$  $
\begin{enumerate}
\item For any sets $I, J$ satisfying $0,1\in I$ and $1\leq |I|, |J|\leq \omega_1$ there exist $T$-colorings 
 $c\colon [\omega_1]^2\rightarrow I\times J$ 
(\cite{acta}).
\item If $c$ is any $T$-coloring, then
$\A_c, \B_c$ are $T$-families. 
\item If $c\colon [\omega_1]^2\rightarrow 3\times[\omega_1]^{<\omega}$ is a $T$-coloring, 
then $\D_c$ is  nice with respect to $\A_c$ and $\B_c$ (\cite{ss}).
\item If $\A$ is any $T_0$-family, then
all linear bounded operators on $\X_\A$ are diagonal plus a separable range operator (\cite{ss, wark}) .
\item If $\A, \B$ are any $T$-families and $\D$ is any family nice with respect to them, then
all linear bounded operators on $\X_{\A, \D}$ are multiples of the identity plus a separable range operator (\cite{ss, wark}) .
\end{enumerate}
\end{proposition}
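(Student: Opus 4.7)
The plan is to handle the five items in sequence, invoking the constructions and main theorems of \cite{acta}, \cite{ss}, and \cite{wark} for items (1), (4), and (5), and verifying items (2) and (3) directly from the definitions. For item (1), I would follow Todorcevic's minimal walks machinery from \cite{acta}: fix a $C$-sequence $\langle C_\alpha\colon\alpha<\omega_1\rangle$, form the associated $\rho$-function, and compose its oscillation-carrying values with a surjection onto $I\times J$ so that oscillation translates into the rectangular two-coordinate targeting required by the $T$-coloring definition; Velleman's paper gives a more elementary variant.

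For item (2), I would fix a $T$-coloring $c$ and verify the four clauses defining a $T$-family for $\A_c,\B_c$. Closure under subsets is tautological; $\bigcup\A_c=\omega_1$ because every singleton lies in $\A_c$; and $\A_c\cap\B_c\subseteq[\omega_1]^1\cup\{\emptyset\}$ because no pair can have $c_0$-color both $0$ and $1$. The substantive clause is the $k$-tuple Ramsey condition, which I would handle by induction on $k$. The base $k=2$ is a direct application of the $T$-coloring property with targets $(0,j_0),(1,j_1)$. At the inductive step, given $\alpha_1<\cdots<\alpha_\ell$ with a valid configuration, I would first strengthen the $T$-coloring property to ``uncountably many witnessing pairs in any uncountable disjoint subfamily'' (if only countably many pairs witnessed the target in some subfamily, deleting their indices would yield an uncountable subfamily with no witnesses, contradicting $T$-coloring), then apply pigeonhole on the finite set $I^{2\ell}$ to the profile $(c_0(\{a(\alpha_i),a(\alpha)\}),c_0(\{b(\alpha_i),b(\alpha)\}))_{i\le\ell}$ to pass to an uncountable subfamily of the tail $\{p_\alpha\colon\alpha>\alpha_\ell\}$ on which this profile is constant, and finally use the $T$-coloring targeting inside this subfamily to arrange that the constant profile is the desired $(0,1,0,1,\ldots,0,1)$.

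For item (3), I would verify the three niceness clauses by direct inspection of $\D_c$. If $A\in\A_c\cup\B_c$ met $\bigcup D$ in three points, two of them would lie in the $\xi$-row $\{\xi_i\}$ or two in the $\eta$-row $\{\eta_i\}$, yielding a pair in $A$ of $c_0$-value $2$, contradicting $A\in\A_c\cup\B_c$; this gives clause (1). The prefix memory $\{\xi_l,\eta_l\colon l<i\}$ in the second coordinate of $c$ on pairs of $D$ forces two distinct $D,D'\in\D_c$ to agree on at most a short common prefix of their enumerations, and explicit bookkeeping in \cite{ss} yields the bound of five pairs in clause (2). Clause (3) is another iteration of the $T$-coloring property, selecting consecutive pairs whose $c$-values have first coordinate $2$ and the required prefix in the second.

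Items (4) and (5) are the main rigidity theorems of \cite{ss,wark}: assuming a bounded operator $T$ is not of the prescribed form, one extracts an uncountable index set on which $Te_\alpha$ has a prescribed off-diagonal component, and invokes the $T$-family Ramsey property of $\A,\B$ (together with niceness of $\D$ in case (5)) to build unit vectors on which $T$ has unbounded norm; in (5) the $\nu_\D$ component of the norm distinguishes scalar multiples of the identity from other diagonal operators. The main obstacle, apart from quoting these technical proofs, is the Ramsey iteration in (2): arranging that the pigeonholed constant profile is the correct one requires interleaving the $T$-coloring targeting with the pigeonhole refinement and, if necessary, revisiting the earlier choices of $\alpha_i$'s so as to force compatibility with the target $(0,1,0,1,\ldots)$ profile.
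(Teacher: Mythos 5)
First, a point of comparison: the paper offers no proof of this proposition at all --- it is stated as a summary of the preceding discussion with citations to \cite{acta}, \cite{ss} and \cite{wark} --- so your items (1), (4) and (5), which defer to the literature, match what the paper does. The substantive part of your proposal is the direct verification of items (2) and (3), and item (2) contains a genuine gap. Your inductive step extends a fixed configuration $\alpha_1<\cdots<\alpha_\ell$ by one new index: you pigeonhole the $c_0$-profile of new indices against the fixed ones to make it constant on an uncountable tail, and then propose to ``use the $T$-coloring targeting inside this subfamily to arrange that the constant profile is the desired $(0,1,\dots,0,1)$''. That last step is not available. Definition \ref{T-coloring} only controls the colour of pairs \emph{both} of whose members range over an uncountable pairwise disjoint family; it gives no control over colours between a fixed finite set and members of such a family, and for a general colouring there may be no $\alpha$ at all whose profile against $\{\alpha_1,\dots,\alpha_\ell\}$ is the required one. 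Your closing remark about ``revisiting the earlier choices of $\alpha_i$'s'' names this problem without resolving it.

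The standard repair is structural rather than one point at a time: by a maximality argument (if a maximal pairwise disjoint family of $\ell$-solutions were countable, deleting the indices it uses would leave an uncountable family with no $\ell$-solution, contradicting the inductive hypothesis) one obtains uncountably many pairwise \emph{disjoint} partial solutions and then merges two of them. But merging two $\ell$-solutions requires prescribing $c_0$ on \emph{all} cross pairs between two $2\ell$-element sets, i.e., the partition property for pairs of $m$-tuples with a full matrix of targets. This is strictly stronger than the two diagonal constraints on pairs of doubletons recorded in Definition \ref{T-coloring}; it is the property the colourings of \cite{acta} actually satisfy and the one used in \cite{ss} and \cite{wark}, and your proof must either invoke it explicitly or restrict to those concrete colourings. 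The same objection applies to clause (3) of niceness in item (3), where the condition $c(\{\xi_i,\xi_j\})=c(\{\eta_i,\eta_j\})=(2,\{\xi_l,\eta_l: l<i\})$ must hold on all cross pairs of a $k$-element subfamily simultaneously; this is the delicate construction carried out in \cite{ss}, not an ``iteration'' of the pair property. Your verifications of the easy clauses --- closure under subsets, $\A_c\cap\B_c\subseteq[\omega_1]^1\cup\{\emptyset\}$, the base case $k=2$, and clause (1) of niceness via the pigeonhole on the $\xi$-row and $\eta$-row of $\bigcup D$ --- are correct.
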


We will not exploit (4) and (5) above in this paper but they provide the motivation
for investigating the above spaces in the context of irregular behavior of the norm.
 In Section 4 we will  consider $T$-colorings with the following additional properties
which allow us to obtain (2) in Theorem \ref{main} 
for the corresponding $\sw$-spaces $X_{\A_c}$.

\begin{definition}\label{sT-coloring}
A function $c=(c_0,c_1)\colon [\omega_1]^2\rightarrow I\times J$, where $0,1\in I$ and $J\neq \emptyset$, 
is called a strong $T$-coloring
if it is a $T$-coloring and if given  any uncountable pairwise disjoint collection 
$\{A_\xi:\xi<\omega_1\}$ of finite subsets of $\omega_1$
there are $\xi<\eta<\omega_1$ such that
$c_0[A_\xi\otimes A_\eta] =\{0\}$ and there are $\xi<\eta<\omega_1$
 such that $c_0[A_\xi \otimes A_\eta] =\{1\}$.
If $T$ is a strong $T$-coloring, then $\A_c$ is called a strong $T_0$-family
and  $\A_c$ and $\B_c$ are called strong $T$-families.
\end{definition}

The following should be clear from Definitions  \ref{T-coloring} and \ref{sT-coloring}.

\begin{lemma}\label{sT-family}
If $\A$ is a strong $T_0$-family then for every uncountable pairwise disjoint family  $\F$ of 
finite subsets of $\omega_1$
there are distinct $A, B\in \F$ such that
$(A\otimes B)\cap \A=\emptyset$ and there are distinct $A, B\in \F$ such that
$(A\otimes B)\subseteq \A.$ Moreover, any finite set $C\subseteq \omega_1$ such that $[C]^2 \subseteq \A$
 is itself an element of $\A$.
\end{lemma}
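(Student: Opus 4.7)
The plan is essentially a direct unpacking of Definition~\ref{sT-coloring}. Since $\A$ is a strong $T_0$-family, by fiat $\A = \A_c$ for some strong $T$-coloring $c=(c_0,c_1)$, so a finite $A\subseteq\omega_1$ lies in $\A$ precisely when $c_0[[A]^2]\subseteq\{0\}$; in particular, for a doubleton $\{\alpha,\beta\}$ membership in $\A$ is just the condition $c_0(\{\alpha,\beta\}) = 0$. Both conclusions of the lemma reduce to this doubleton characterization.

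First I would take the given uncountable pairwise disjoint $\F$ and fix an enumeration $\{A_\xi:\xi<\omega_1\}$ of an $\omega_1$-sized subfamily of $\F$; this minor reindexing is needed only because the strong $T$-coloring clause in Definition~\ref{sT-coloring} is phrased for $\omega_1$-indexed families. Applying that clause once gives $\xi<\eta$ with $c_0[A_\xi\otimes A_\eta]=\{1\}$, so no doubleton from $A_\xi\otimes A_\eta$ has $c_0$-value $0$ and therefore $(A_\xi\otimes A_\eta)\cap\A=\emptyset$. Applying it a second time gives $\xi'<\eta'$ with $c_0[A_{\xi'}\otimes A_{\eta'}]=\{0\}$, so every such doubleton is itself a member of $\A$ by the characterization above, i.e.\ $(A_{\xi'}\otimes A_{\eta'})\subseteq\A$.

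For the moreover clause I would take a finite $C\subseteq\omega_1$ with $[C]^2\subseteq\A$ and invoke the doubleton characterization a third time: each $\{\alpha,\beta\}\in[C]^2$ satisfies $c_0(\{\alpha,\beta\})=0$, whence $c_0[[C]^2]\subseteq\{0\}$ and therefore $C\in\A_c=\A$ by the very definition of $\A_c$.

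There is no serious obstacle here; the only point worth flagging is the bookkeeping remark that strong $T$-colorings are defined against $\omega_1$-indexed pairwise disjoint families, so one must pass from the arbitrary uncountable $\F$ to an enumerated subfamily before applying the axiom. Everything else is a mechanical translation between the set-theoretic statement of the lemma and the colour-theoretic statement of the definition.
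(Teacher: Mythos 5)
Your proof is correct and is exactly the unpacking of Definitions \ref{T-coloring} and \ref{sT-coloring} that the authors intend: the paper omits the argument entirely, stating only that the lemma ``should be clear'' from those two definitions. The one bookkeeping point you flag (re-enumerating an uncountable subfamily of $\F$ by $\omega_1$ before invoking the strong $T$-coloring clause) is handled correctly, so there is nothing to add.
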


For the existence of strong $T$-colorings we need some extra 
set-theoretic hypothesis. Clearly, the following one  is implied by the continuum hypothesis
and is compatible with a variety of its negations (see, e.g., \cite{bj}):

\begin{proposition}[\cite{luzin}]\label{nonmeager} If there is a nonmeager
subset of $\R$ of the first uncountable cardinality $\omega_1$, then there exists
a strong $T$-coloring $c\colon[\omega_1]^2\rightarrow3\times[\omega_1]^{<\omega}$.
\end{proposition}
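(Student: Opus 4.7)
The plan is to augment a ZFC construction of a $T$-coloring, such as the one referenced in the discussion after Definition~\ref{T-coloring}, with a diagonalization powered by the nonmeager set of size $\omega_1$, upgrading the pair-level $T$-coloring property to the finite-rectangle property of Definition~\ref{sT-coloring}. First I would fix a $T$-coloring $c' = (c_0', c_1') : [\omega_1]^2 \to 3 \times [\omega_1]^{<\omega}$ provided by Todorcevic's construction, keeping the second coordinate $c_1'$ unchanged as $c_1$ in the target coloring; this preserves the structural data so that $\D_c$ remains nice in the sense of Definition~\ref{def-D}. I would also fix an enumeration $\{r_\alpha : \alpha < \omega_1\}$ of a nonmeager subset of $2^\omega$, available by hypothesis after transfer from $\R$ via a Borel bijection.

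Next I would redefine the first coordinate $c_0 : [\omega_1]^2 \to 3$ by a rule of the form: for $\alpha < \beta$, let $c_0(\{\alpha,\beta\})$ be determined by the bits of $r_\alpha$ and $r_\beta$ at a coordinate $n(\alpha,\beta) \in \omega$ obtained from a fixed coding function, arranged so that the events ``$c_0 = 0$'' and ``$c_0 = 1$'' each become clopen conditions on $(r_\alpha, r_\beta) \in 2^\omega \times 2^\omega$, while agreeing with $c_0'$ on enough pairs to retain the basic $T$-coloring property. To verify the strong property, given an uncountable pairwise disjoint family $\F = \{A_\xi : \xi < \omega_1\}$ of finite subsets of $\omega_1$, I would apply the $\Delta$-system lemma and pigeonhole refinement to obtain an uncountable subfamily with fixed common size $k$ and uniform combinatorial type in both the ordinals and the coded reals. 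Assuming no $\xi < \eta$ satisfies $c_0[A_\xi \otimes A_\eta] = \{0\}$ would then force the tuples $(r_\alpha : \alpha \in A_\xi) \in (2^\omega)^k$ to avoid, for uncountably many $\xi$, a nonempty clopen configuration determined by later members of the family. A standard Baire-category argument in $(2^\omega)^k$, coupled with the nonmeagerness of $\{r_\alpha\}$, then yields a contradiction; the monochromatic $\{1\}$ case is symmetric by swapping the roles of bits.

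The main obstacle is calibrating the definition of $c_0$ so that three competing constraints are met simultaneously: (i) the pair-level $T$-coloring property is preserved, meaning for any two prescribed colors in $3 \times [\omega_1]^{<\omega}$ and any uncountable disjoint family of pairs, both colors must be realizable; (ii) the unchanged second coordinate $c_1 = c_1'$ must remain compatible with the modified first coordinate so that the output is a bona fide $T$-coloring; and (iii) the dependence of $c_0$ on the $r_\alpha$'s must be transparent enough that a uniform Baire-category argument works across all finite rectangles arising from $\Delta$-system reductions of arbitrary uncountable disjoint families. The art lies in choosing the coding function $n(\alpha,\beta)$ and the rule for $c_0$ so as to decouple the pair-level requirement from the rectangle-level requirement, using the real parameters $r_\alpha$ as the mechanism through which the nonmeager hypothesis enters.
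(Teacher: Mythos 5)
The paper offers no proof of this proposition: it is quoted from the cited work of Kojman, Rinot and Stepr\=ans, so your attempt can only be measured against the construction there, and against it your outline has two genuine gaps, each sitting exactly where the real content of the theorem lies. The first is the category step. Nonmeagerness of $\{r_\alpha:\alpha<\omega_1\}$ in $2^\omega$ gives you nothing directly about the family of $k$-tuples $(r_\alpha)_{\alpha\in A_\xi}$ inside $(2^\omega)^k$: finite powers of nonmeager (even Luzin) sets need not be nonmeager, and the derived set of tuples is merely some set of size $\omega_1$ in $(2^\omega)^k$ with no a priori category properties, so ``a standard Baire-category argument in $(2^\omega)^k$ coupled with nonmeagerness'' is not available. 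The known arguments go through Bartoszy\'nski's combinatorial characterization of nonmeager sets (an $\omega_1$-sized family of functions in $\omega^\omega$ such that every $g$ agrees infinitely often with some member), or through the stronger ``strongly Luzin'' hypothesis of the cited paper, and the coloring is built from oscillations of these functions so that realizing a prescribed value on an entire finite rectangle reduces to a single instance of the infinitely-often-equal property. That reduction is the theorem; your sketch assumes it.

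The second gap is the decoupling of the coordinates. The $T$-coloring property is a joint property of $c=(c_0,c_1)$: it demands that for an arbitrary uncountable disjoint family of pairs one can realize two prescribed values of the full pair $(i,j)\in 3\times[\omega_1]^{<\omega}$ simultaneously, on the two columns, at the same indices $\xi<\eta$. Keeping Todorcevic's $c_1'$ while independently rewriting $c_0$ from the reals gives no mechanism for this simultaneity; ``agreeing with $c_0'$ on enough pairs'' is a restatement of the obstacle you yourself identify, not a construction that overcomes it. (Note also that $\A_c$ and $\B_c$ are defined from $c_0$, so changing $c_0$ changes them, and nothing about $\D_c$ is ``preserved'' for free.) The workable route is to build from the nonmeager set a single coloring $d\colon[\omega_1]^2\rightarrow\omega_1$ with the stronger property that for every uncountable pairwise disjoint family of $n$-element sets and every prescribed matrix of colors there are two members realizing that matrix on their rectangle, and then compose $d$ with a surjection onto $3\times[\omega_1]^{<\omega}$: the $T$-coloring property follows by applying the matrix property to the family of doubletons with the two diagonal entries prescribed, and the strong rectangle property for $c_0$ follows by taking constant matrices. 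Either carry out that single-coloring construction in full, or do what the paper does and cite it.
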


In Proposition \ref{ma-equi} we will consider another well known additional 
set-theoretic axiom, namely Martin's axiom.
For the definition and related terminology the reader may see any set theory textbook, e.g., \cite{bj}. 
 However, the following remarks
should be sufficient to follow the proof of this proposition. Two elements $p, q$ of a partial order $(\PP, \leq)$
are said to be compatible if there is $r\in \PP$ such that $r\leq p, q$. An antichain in $\PP$ is any pairwise incompatible 
subset of $\PP$. $\PP$ is c.c.c. if it does not admit an uncountable antichain.
A subset $\PP'$ of $\PP$ is centered if for every finite $p_1, \dots, p_k\in \PP'$ there is $p\in \PP$
such that $p\leq p_1, \dots, p_k$. We will use the following fact:

\begin{proposition}[1.4.24 \cite{bj}]\label{ma-sigma}
Assuming Martin's axiom and the negation of the continuum hypothesis every partial
order $\PP$ of cardinality $\omega_1$ which is c.c.c. is $\sigma$-centered, that is $\PP=\bigcup_{n\in \N}\PP_n$
where each $\PP_n$ is centered.
\end{proposition}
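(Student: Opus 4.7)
The plan is to apply Martin's axiom to a natural ``coloring'' forcing $Q$ whose generic filter produces a $\sigma$-centered decomposition of $\PP$. I would take conditions of $Q$ to be finite partial functions $f\colon \PP\to \N$ such that for every color $n\in \N$ the preimage $f^{-1}(n)$ admits a common lower bound in $\PP$, ordered by reverse extension. For each $p\in \PP$ the set $D_p=\{f\in Q:p\in\dom(f)\}$ is dense in $Q$, since an $f\in Q$ can always be extended by assigning $p$ a fresh color not occurring in the range of $f$. Assuming $Q$ is c.c.c., Martin's axiom applied with the $|\PP|=\omega_1$ dense sets $\{D_p\}_{p\in \PP}$ yields a filter whose union is a total function $F\colon \PP\to \N$ for which $\PP=\bigcup_{n\in \N}F^{-1}(n)$ is the sought $\sigma$-centered decomposition.

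The main obstacle is verifying that $Q$ itself is c.c.c., and my plan is to reduce this to c.c.c.\ of all finite powers $\PP^k$. Given any uncountable family $\{f_\alpha:\alpha<\omega_1\}\subseteq Q$, I would apply the $\Delta$-system lemma together with pigeonhole to refine to an uncountable subfamily whose domains form a $\Delta$-system with root $R$, whose restrictions to $R$ all coincide, and which share a common ``shape'' (the same cardinality $k$ and the same finite set of colors, deployed in the same combinatorial pattern on the $\Delta$-system tails). For each color $n$ occurring in a condition and each $\alpha$, fix a witness $p_n^\alpha\in \PP$ below every element of $f_\alpha^{-1}(n)$, producing an $\omega_1$-sequence of tuples $\bar p_\alpha=(p_{n_1}^\alpha,\dots,p_{n_k}^\alpha)\in \PP^k$. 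If some $\bar p_\alpha,\bar p_\beta$ with $\alpha<\beta$ are componentwise compatible in $\PP^k$, then a coordinatewise lower bound lies below every element of $f_\alpha^{-1}(n)\cup f_\beta^{-1}(n)$ for each $n$, so $f_\alpha,f_\beta$ are already compatible in $Q$. Hence c.c.c.\ of every $\PP^k$ entails c.c.c.\ of $Q$.

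To obtain c.c.c.\ of all finite powers, I would invoke the classical Knaster bootstrap: under Martin's axiom and the negation of the continuum hypothesis, every c.c.c.\ poset has the Knaster property, meaning every uncountable subset contains an uncountable pairwise-compatible subfamily. This is proved by applying MA$_{\omega_1}$ to the auxiliary forcing whose conditions are finite pairwise-compatible subsets of the given uncountable family (with the natural dense sets forcing unboundedness in $\omega_1$), whose own c.c.c.\ follows from a standard $\Delta$-system argument together with the assumed c.c.c.\ of $\PP$. The Knaster property is then productive in ZFC, in the sense that a Knaster poset times any c.c.c.\ poset is again c.c.c., so inductively every $\PP^k$ is c.c.c. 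I expect the most delicate step of the whole argument to be this Knaster upgrade, since it is itself a nontrivial application of Martin's axiom to a nested auxiliary forcing; by contrast, the $\Delta$-system reduction from $Q$ to $\PP^k$ and the final invocation of MA to the coloring forcing $Q$ are essentially bookkeeping.
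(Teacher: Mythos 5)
The paper offers no proof of this proposition: it is quoted, with its tag, from Bartoszy\'nski--Judah (Lemma 1.4.24), so there is no in-paper argument to compare against. Your overall architecture is the standard textbook proof of that result and is sound: the coloring poset $Q$ of finite partial functions $f\colon\PP\to\N$ whose colour classes each admit a common lower bound, the density of the sets $D_p$, the $\Delta$-system reduction of the c.c.c.\ of $Q$ to the c.c.c.\ of the finite powers $\PP^k$ via tuples of witnesses, the productivity of the Knaster property with c.c.c., and the final application of MA to $Q$ (legitimate, since only $|\PP|=\omega_1<\mathfrak{c}$ dense sets are needed) are all correct, and the union of the resulting filter is indeed a total colouring with centered fibres.

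The genuine gap is in your sketch of the Knaster bootstrap, the step you yourself flag as delicate. The auxiliary forcing you propose --- finite pairwise-compatible subsets of the given uncountable family $W=\{p_\alpha:\alpha<\omega_1\}$ --- does not work as described. First, the ``unboundedness'' sets need not be dense: a finite linked $F$ (or even a single $p_\alpha$) may be compatible with only countably many members of $W$, so it cannot always be extended by a $p_\beta$ of large index. Second, and more seriously, its c.c.c.\ does not follow ``from a standard $\Delta$-system argument together with the c.c.c.\ of $\PP$'': after the refinement you must produce two disjoint finite linked sets all of whose cross-pairs are compatible, which is exactly the productive-c.c.c.-type conclusion you are in the middle of establishing, so the argument is circular. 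The classical proof applies MA to $\PP$ itself: in the Boolean completion the elements $u_\alpha=\sup\{p_\beta:\beta\ge\alpha\}$ are eventually constant, equal to some $u$ (otherwise one extracts an uncountable antichain from the differences); fixing $q\in\PP$ with $q\le u$, the sets $E_\alpha=\{r\le q:\exists\beta\ge\alpha\ r\le p_\beta\}$ are dense below $q$, and a filter meeting all of them yields an uncountable centered subfamily of $W$. Since the Knaster lemma is a correct classical theorem, your proof is repaired by substituting this argument (or simply citing the lemma); everything downstream of it stands.
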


\section{General $\sw$-spaces and $\sss$-spaces}


\begin{proposition}\label{1+}
The unit sphere of every $\sw$-space and every $\sss$-space admits an  uncountable $(1+)$-separated set.
\end{proposition}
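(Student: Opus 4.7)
For the $\sss$-space case the construction is direct: I take the family $\{\tfrac{1}{2} e_\gamma : \gamma<\omega_1\}$. Since $\{\gamma\}\in\A$ we have $\|e_\gamma\|_\A=1$, and since any $D\in\D$ contains at most one pair meeting $\gamma$ we have $\nu_\D(e_\gamma)=1$, so $\|e_\gamma\|_{\A,\D}=2$ and each $\tfrac{1}{2}e_\gamma$ lies in $S_{\X_{\A,\D}}$. For distinct $\gamma,\delta$, the singleton-pair set $\{\{\gamma,\delta\}\}$ belongs to $\D$, and the pair $\{\gamma,\delta\}$ contributes $|1-(-1)|^2=4$ to the sum defining $\nu_\D(e_\gamma-e_\delta)^2$, so $\nu_\D(e_\gamma-e_\delta)\geq 2$; combined with $\|e_\gamma-e_\delta\|_\A\geq 1$ this gives $\|\tfrac{1}{2}e_\gamma-\tfrac{1}{2}e_\delta\|_{\A,\D}\geq \tfrac{3}{2}>1$.

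For the $\sw$-space case I plan as follows. Starting with any uncountable pairwise disjoint family $\{\{a_\xi,b_\xi\}:\xi<\omega_1\}$ of $2$-element subsets of $\omega_1$, I iterate the $T$-family property with $k=2$: each application locates, in the current uncountable remainder, two indices $\xi_1,\xi_2$ with $\{a_{\xi_1},a_{\xi_2}\}\in\A$, and we discard $\xi_1,\xi_2$ from the index set. Running this transfinitely for $\omega_1$ steps (the remainder stays uncountable throughout) produces an uncountable pairwise disjoint family $\{\{c_\zeta,d_\zeta\}:\zeta<\omega_1\}$ of elements of $\A$. I then define unit vectors $x_\zeta=\tfrac{1}{\sqrt{2}}(e_{c_\zeta}-e_{d_\zeta})\in S_{\X_\A}$; the set $\{c_\zeta,d_\zeta\}$ itself witnesses both $\|x_\zeta\|_\A=1$ and $\|x_\zeta-x_{\zeta'}\|_\A\geq 1$ for distinct $\zeta,\zeta'$.

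To upgrade this to strict $(1+)$-separation, the plan is to refine the family once more so that for every pair $\zeta\neq\zeta'$ some $3$-element subset of $\{c_\zeta,d_\zeta,c_{\zeta'},d_{\zeta'}\}$ lies in $\A$; any such subset contributes $3/2$ to $\|x_\zeta-x_{\zeta'}\|_\A^2$, yielding $\|x_\zeta-x_{\zeta'}\|_\A\geq\sqrt{3/2}>1$. The main obstacle is exactly this refinement step: the $T$-family property on its own delivers only finite $\A$-cliques within any uncountable set, so securing the $3$-subset condition \emph{simultaneously} for all uncountably many pairs demands a more delicate combinatorial argument that exploits both $\A$ and $\B$ in the $T$-family definition (say through a Ramsey-type extraction or a well-chosen iterative diagonalization) rather than appealing to $\A$ alone.
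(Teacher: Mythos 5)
Your $\sss$-half is correct and takes a genuinely different (and much shorter) route than the paper. Since Definition \ref{def-nu} puts all single pairs into $\D$, the element $\{\{\gamma,\delta\}\}\in\D$ already gives $\nu_\D(e_\gamma-e_\delta)\geq 2$, while $\|e_\gamma\|_{\A,\D}=2$ and $\|e_\gamma-e_\delta\|_\A\geq 1$; so $\{\tfrac12 e_\gamma:\gamma<\omega_1\}$ is a $\tfrac32$-separated subset of the sphere, in fact a $(1+\varepsilon)$-separated set, which is more than the proposition asks for. The paper instead reuses the vectors built for the $\sw$-case and runs a stabilization of the values $\nu_\D(x_\alpha)$ (making them rational, thinning so they coincide, and separating the witnessing elements of $\D$ from later supports); your argument bypasses all of that at the cost of saying nothing about the $\sw$-case.

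The $\sw$-half, however, contains a genuine gap, and the completion you sketch cannot be carried out. The transfinite extraction of an uncountable pairwise disjoint family $\{\{c_\zeta,d_\zeta\}:\zeta<\omega_1\}\subseteq\A$ is fine and yields a $1$-separated set, but the upgrade step is hopeless: the $T$-family property is purely existential (it produces \emph{some} finite set of indices forming an $\A$-clique), so it cannot force a $3$-element subset of $\{c_\zeta,d_\zeta,c_{\zeta'},d_{\zeta'}\}$ into $\A$ for \emph{all} pairs $\zeta\neq\zeta'$ from an uncountable set. Worse, if $\A$ is a strong $T_0$-family (these exist under the continuum hypothesis, Proposition \ref{nonmeager}), then by Lemma \ref{sT-family} \emph{every} uncountable subfamily contains $\zeta\neq\zeta'$ with $(\{c_\zeta,d_\zeta\}\otimes\{c_{\zeta'},d_{\zeta'}\})\cap\A=\emptyset$; for such a pair every $A\in\A$ meets the four relevant points inside a single one of the two doubletons (a cross pair would lie in $\A$ by closure under subsets), so $\|x_\zeta-x_{\zeta'}\|_\A$ equals exactly $1$. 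Hence no uncountable subfamily of your vectors is $(1+)$-separated and no further refinement can save this choice of vectors. More globally, your plan would output a $\sqrt{3/2}$-separated, hence $(1+\varepsilon)$-separated, set, which Proposition \ref{equi-sep} shows is consistently impossible for $\sw$-spaces. The paper's construction is structurally opposite to yours: the supports $A_\alpha$ are \emph{maximal $\A$-free} sets placed in increasing blocks, so $\|x_\alpha\|_\A=1$ is witnessed by a singleton, while the maximality of $A_\alpha$ forces a cross pair $\{\xi,\min A_\beta\}\in\A$ with $\xi\in\supp(x_\alpha)$; since $x_\beta(\min A_\beta)=1$ and $x_\alpha(\xi)\neq 0$ this yields $\|x_\alpha-x_\beta\|_\A\geq\sqrt{1+x_\alpha(\xi)^2}>1$, a strict inequality with no uniform $\varepsilon$, which is exactly what the strong-coloring obstruction permits.
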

\begin{proof}
Let $\A$ and $\B$ be $T$-families inducing an $\sw$-space $\X_\A$. 
Note that every uncountable subset of $\omega_1$ has a two element subset in $\A$. 
This follows from Definition \ref{def-T-families}. On the other hand the empty set and all
singletons are in $\A$, as $\A$ is closed under subsets and covers $\omega_1$.

Proceeding by induction we construct a family $(A_\alpha)_{\alpha<\omega_1}$ of countable
subsets of $\omega_1$ as follows: 
$A_0 \subseteq \omega_1$ is any maximal (necessarily at most countable) set 
satisfying $[A_0]^{<\omega}\cap\A\subseteq[A_0]^1\cup\{\emptyset\}$.
 Having constructed $A_\alpha$ for $\alpha<\beta$ we pick $\gamma > 
\sup_{\alpha<\beta} \sup A_\alpha$ and we take  as $A_\beta$ any maximal subset of $
\omega_1 \setminus \gamma$ satisfying  $[A_\beta]^{<\omega}\cap\A\subseteq[A_\beta]^1\cup\{\emptyset\}$
Possibly $A_\alpha$s 
 may be singletons, but they are all nonempty.
 Having constructed $A_\alpha$s let $x_\alpha$ be any element of ${\ell_2(\omega_1)}$ 
satisfying $\supp(x_\alpha)=A_\alpha$, 
$x_\alpha(\min (A_\alpha)) = 1$ and $|x_\alpha(\gamma)|\leq 1$
 for $\gamma\neq \min (A_\alpha)$, $\gamma<\omega_1$.  By the choice
of $A_\alpha$ we have $x_\alpha$s in $\X_\A$. 
Moreover, since the only sets of $\A$ which meet $A_\alpha$ are at most singletons
we have that $\|x_\alpha\|_\A=1$ for every $\alpha<\omega_1$.

To see that $(x_
\alpha)_{\alpha<\omega_1}$ forms an uncountable $(1+)$-separated set in $\X_\A$ pick any $\alpha<\beta <
\omega_1$. It follows from the maximality of $A_\alpha$ that  there is $\xi\in A_\alpha$
such that  $\{\xi, \min(A_\beta)\}\in \A$. Therefore $$\| x_\alpha - x_\beta \|_{\A} \geq 
\sqrt{|x_\alpha(\xi)|^2 + |x_\beta(\min A_\beta)|^2} > 1, \leqno (*)$$
which concludes the proof in the case of $\sw$-spaces.

Now suppose that $\D$ is a nice family with respect to $\A$ and $\B$ and consider an $\sss$-space $\X_{\A,\D}$.
 First let us note that above we can pick $x_\alpha$s so that the numbers $\nu_D(x_\alpha)$ are rational. 
 Indeed, if $y\in \ell_2(A_\alpha)$ is any vector satisfying $\supp(y) = A_\alpha \setminus \{ \min(A_\alpha)\}$ 
 and $|y(\gamma)|\leq 1$ for $\gamma\in A_\alpha$ 
 (so that above we picked $x_\alpha$ to be $\Car_{\{\min(A_\alpha)\}}+y$), then the function which sends $t\in [0,1]$ to
$\nu_D(\Car_{\{\min(A_\alpha)\}}+ty)$ is  continuous and defined on a connected domain.
If it is constant, then $\nu_D(\Car_{\{\min(A_\alpha)\}}+y)=\nu_D(\Car_{\{\min(A_\alpha)\}})=1$. If it is nonconstant
it assumes some rational value.
So we will succeed in obtaining such a value of $\nu_\D(x_\alpha)$.

For $n\in \N$ let $D_{n, \alpha}\in \D$ be such
that 
$$\nu_\D(x_\alpha)=\sup_{n\in \N}\sqrt{\sum_{\{\xi, \eta\}\in D_{n, \alpha}}|x_\alpha(\xi)-x_\alpha(\eta)|^2}.$$
By passing to an uncountable subsequence we may assume that 
for every $\alpha<\beta<\omega_1$ we have 
$$\bigcup\bigcup \{D_{n, \alpha}:   n\in \N\}\cap A_\beta=\emptyset.$$
This implies that 
$$\nu_\D(x_\alpha)\leq\nu_\D(x_\alpha-x_\beta)\leqno (**)$$
as the support of $x_\beta$ is equal to $A_\beta$.
It follows from (*) and (**)  that for every $\alpha<\beta<\omega_1$ we have 
$$\| x_\alpha - x_\beta \|_{\A, \D} >
\|x_\beta\|_\A+\nu_\D(x_\alpha).\leqno (***)$$
By passing to an uncountable subset we may assume that
the values of $\nu_D(x_\alpha)$ are the same as they are rational, and so
consequently all the values of $\|x_\alpha\|_{\A, \D}$ are the same as well, since $\|x_\alpha\|_\A=1$
for every $\alpha<\omega_1$.
So (***) gives $\| x_\alpha - x_\beta \|_{\A, \D} >
\|x_\beta\|_\A+\nu_\D(x_\beta)=\|x_\beta\|_{\A, \D}$ for every $\alpha<\beta<\omega_1$.
That is, the set $\lbrace \frac{1}{\|x_\alpha\|_{\A, \D}} x_\alpha:\ \alpha<\omega_1\rbrace$ is $(1+)$-separated.
\end{proof}

\begin{proposition}\label{ma-equi}
Assume Martin's axiom and the negation of the continuum hypothesis.
The unit sphere of every  $\sw$-space  contains an uncountable $\sqrt{2}$-equilateral set and the 
unit sphere of every $\sss$-space contains an uncountable $(1+\varepsilon)$-separated set for some $\varepsilon>0$.
\end{proposition}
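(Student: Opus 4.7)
The plan is to use Martin's axiom together with $\neg$CH, through Proposition~\ref{ma-sigma}, to produce an uncountable $\A$-clique in $\omega_1$ and to read off both the $\sqrt{2}$-equilateral set (for $\sw$-spaces) and a $(1+\varepsilon)$-separated set (for $\sss$-spaces) from it.

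Concretely, the target is an uncountable $C=\{\xi_\alpha:\alpha<\omega_1\}\subseteq\omega_1$ with $\{\xi_\alpha,\xi_\beta\}\in\A$ for all distinct $\alpha,\beta$. Given such $C$, the singleton characteristic vectors $x_\alpha:=\Car_{\{\xi_\alpha\}}$ satisfy $\|x_\alpha\|_\A=1$ and, for distinct $\alpha,\beta$, the doubleton $\{\xi_\alpha,\xi_\beta\}\in\A$ realises $\|x_\alpha-x_\beta\|_\A=\sqrt{2}$, producing the $\sqrt{2}$-equilateral family on $S_{\X_\A}$ for the $\sw$-case. For the $\sss$-case the same vectors give $\nu_\D(x_\alpha)=1$ (the single pair $\{\{\xi_\alpha,\eta\}\}\in\D$ witnesses the supremum) and $\nu_\D(x_\alpha-x_\beta)\geq 1$; after normalising one obtains
$$\frac{\|x_\alpha-x_\beta\|_{\A,\D}}{\|x_\alpha\|_{\A,\D}}\;\geq\;\frac{\sqrt{2}+1}{2}\;=\;1+\frac{\sqrt{2}-1}{2},$$
i.e.\ a uniform $(1+\varepsilon)$-separated family on $S_{\X_{\A,\D}}$ with $\varepsilon:=(\sqrt{2}-1)/2>0$.

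To construct $C$, consider the poset $\PP:=\{F\in[\omega_1]^{<\omega}:[F]^2\subseteq\A\}$ of finite $\A$-cliques ordered by reverse inclusion. If $\PP$ is c.c.c., then by Proposition~\ref{ma-sigma} we have $\PP=\bigcup_n\PP_n$ with each $\PP_n$ centered; each $S_n:=\bigcup\PP_n$ is itself an $\A$-clique, since any two of its elements sit inside a common condition; and because $\{\xi\}\in\PP$ for every $\xi<\omega_1$, we have $\omega_1=\bigcup_n S_n$, forcing some $S_n$ to be uncountable.

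The principal obstacle is to verify the c.c.c.\ of $\PP$. Given a putative uncountable antichain $\{F_\xi\}$, a $\Delta$-system reduction together with Ramsey-type thinning of the positions of the missing cross-edges produces disjoint finite $\A$-cliques $D_\xi=\{d_\xi(1),\ldots,d_\xi(n)\}$ of a constant size $n\geq 1$ and a fixed index pair $(i_0,j_0)$ with $\{d_\xi(i_0),d_\eta(j_0)\}\notin\A$ for all $\xi<\eta$. The case $n=1$ immediately exhibits an uncountable anticlique in the graph $\A\cap[\omega_1]^2$, contradicting the $T$-family property. For $n\geq 2$ one plays the $T$-family properties of $\A$ and $\B$ against each other on the two coordinates $i_0,j_0$, exploiting $\A\cap\B\subseteq[\omega_1]^1\cup\{\emptyset\}$ on doubletons, to extract a compatible pair of conditions; this last step, rather than the set-theoretic machinery, is the delicate combinatorial heart of the argument.
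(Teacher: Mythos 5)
Your reduction of both statements to the existence of an uncountable $\A$-clique $C$ (a set with $[C]^2\subseteq\A$) is where the argument breaks: such a set provably does not exist for any $T_0$-family, in ZFC. Indeed, let $\B$ witness that $\A$ is a $T_0$-family, split an arbitrary uncountable $C\subseteq\omega_1$ into pairwise disjoint doubletons $\{a(\alpha),b(\alpha)\}\subseteq C$ and apply Definition \ref{def-T-families} with $k=2$: you obtain $\alpha_1<\alpha_2$ with $\{b(\alpha_1),b(\alpha_2)\}\in\B$, and since $\A\cap\B\subseteq[\omega_1]^1\cup\{\emptyset\}$ this doubleton is \emph{not} in $\A$, so $C$ is not a clique. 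Consequently your poset of finite $\A$-cliques cannot be c.c.c.\ under MA${}+\neg$CH (your own $\sigma$-centered argument, run forwards, would otherwise manufacture the impossible uncountable clique), and the ``delicate combinatorial heart'' you defer --- verifying c.c.c.\ --- is not a gap to be filled but a statement that is false. The mechanics downstream of the clique (singleton vectors, the $\sqrt2$ computation, the $(\sqrt2+1)/2$ ratio for the $\sss$-case) are fine, but they rest on a nonexistent object.

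The paper works with the dual poset $\PP$ of finite $\A$-\emph{free} sets $p$ (those with $[p]^{<\omega}\cap\A\subseteq[p]^1\cup\{\emptyset\}$) and runs your $\sigma$-centered argument as a proof by contradiction: if $\PP$ were c.c.c., Proposition \ref{ma-sigma} would yield an uncountable set with no pair in $\A$, contradicting the $\A$-half of the $T$-family property. Hence $\PP$ has an uncountable antichain, i.e.\ (after a $\Delta$-system refinement) an uncountable family of pairwise disjoint finite sets of a fixed size, each containing no pair of $\A$, such that any two are joined by a cross-pair in $\A$. The characteristic functions of these sets have $\A$-norm $1$, their differences meet every $A\in\A$ in at most two coordinates (giving the upper bound $\sqrt2$), and the linking cross-pair gives the matching lower bound; the $\nu_\D$-term is then handled by stabilizing its (finitely many possible) values and separating the witnessing $D$'s from later supports. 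If you want to salvage your write-up, replace cliques by $\A$-free sets throughout and aim for an uncountable \emph{antichain} rather than an uncountable \emph{filter/clique}; the MA input is exactly Proposition \ref{ma-sigma}, used negatively.
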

\begin{proof}
Let $\A,\B$ be $T$-families inducing the  $\sw$-space $\X_\A$.
Consider a partial order $\PP$ consisting of finite sets $p\subseteq \omega_1$
satisfying $[p]^{<\omega}\cap \A\subseteq[p]^1\cup\{\emptyset\}$
with the order given by the inverse inclusion. 

Note that our hypothesis implies that $\PP$  does not satisfy c.c.c.
Indeed, if $\PP$ satisfies c.c.c., then (since $|\PP| = \omega_1$)  by Proposition \ref{ma-sigma} 
 $\PP = \bigcup_{n<\omega} \PP_n$ with $\PP_n$ centered. 
 Since $\{ \alpha \} \in \PP$ for every $\alpha<\omega_1$, 
 there would be $n\in \N$ and an uncountable
  $I\subseteq \omega_1$ such that $\{\alpha \} \in \PP_n$ for every $\alpha\in I$. 
  Then $I$ would be an uncountable set without any pair in $\A$ -- a contradiction with
  Definition \ref{def-T-families}.

Let $\PP'$ be an uncountable antichain in $\PP$, i.e., an uncountable family of finite subsets of $\omega_1$
 with no pair in $\A$ such that for every distinct $p, p'\in \PP'$ the union $p\cup p'$ has a pair in $\A$. 
 By applying the $\Delta$-system Lemma and removing the root we may assume that
they are pairwise disjoint and of the same finite cardinality. We will show that $\{\Car_p: p\in \PP'\}$
  forms an uncountable $\sqrt{2}$-equilateral
subset of the unit sphere of $\X_\A$. 
As the elements of $\PP'$  do not contain
 nonempty elements of $\A$ other than singletons,
it is clear that $\{\Car_p: p\in \PP'\}$ is included 
in the unit sphere of $\X_\A$. If $p, p'$ are distinct element of $\PP'$ and $A\in \A$, 
then $|A\cap (p\cup p')|\leq 2$. It follows that
$\|\Car_p-\Car_{p'}\|_{\A}\leq \sqrt{2}$. But as $p$ and $p'$ are incompatible, 
there must be $\xi\in p$ and $\eta\in p'$
such that $\{\xi, \eta\}\in\A$ and so 
$$\|\Car_p-\Car_{p'}\|_{\A}\geq\sqrt{|(\Car_p-\Car_{p'})(\xi)|^2+|(\Car_p-\Car_{p'})(\eta)|^2}=\sqrt{2},$$
which concludes the proof for $\sw$-spaces.

Now let $\D$ be a nice family with respect to $\A$ and $\B$ inducing the $\sss$-space $\X_{\A,\D}$.  
Since each $p\in \PP'$ is finite,  the value of  $\nu_\D(\Car_{p})$ is finite and so $\Car_{p}\in \X_{\A, \D}$. 
By passing to an uncountable set we may assume 
that there is $r\in \mathbb{R}$ such that $\nu(\Car_p) = r$ for every $p\in \PP'$
as the cardinality of $p$ limits the number of possible values of  $\nu_\D(\Car_p)$ to a finite set. 
Let us enumerate $\PP' = \{ p_\alpha:\ \alpha<\omega_1\}$.

For $\alpha<\omega_1$ let $D_\alpha\in \D$ be such
that 
$$\nu_\D(\Car_{p_\alpha})= \sqrt{\sum_{\{\xi, \eta\}\in D_\alpha}
|\Car_{p_\alpha}(\xi)-\Car_{p_\alpha}(\eta)|^2}.$$
Since $p_\alpha$s are pairwise disjoint, by passing to an uncountable subset we may assume that
for every $\alpha<\beta<\omega_1$ we have 
$$(\bigcup D_\alpha ) \cap p_\beta=\emptyset.$$
This implies that $\nu_\D(\Car_{p_\alpha})\leq\nu_\D(\Car_{p_\alpha}-\Car_{p_\beta})$.
It follows that
$$\| \Car_{p_\alpha} - \Car_{p_\beta} \|_{\A, \D} = 
\| \Car_{p_\alpha} - \Car_{p_\beta}\|_\A +\nu_\D(\Car_{p_\alpha} - \Car_{p_\beta}) \geq\sqrt 2+r.$$

So $\lbrace \frac{1}{1+r} \Car_{p_{\alpha}}:\ \alpha<\omega_1 \rbrace$ is 
an uncountable $\frac{\sqrt 2+r}{1+r}$-separated subset of the unit sphere of the $\sss$-space.
\end{proof}

\begin{lemma}\label{sqrt5} Suppose that $c=(c_0, c_1)\colon [\omega_1]^2\rightarrow 3\times[\omega_1]^{<\omega}$
is a $T$-coloring and    $\alpha_0<\alpha_1<\alpha_2<\alpha_3<\beta_0<\beta_1<\beta_2<\beta_3$ are  such that
\begin{enumerate}
\item $\{\alpha_1,\alpha_2\}, \{\beta_1,\beta_2\}\in \B_c$,
\item $\{\alpha_2,\beta_1\}\in \A_c$,
\item  $c(\{\alpha_1, \beta_2\})=(2,\emptyset)$,
\item $c(\{\alpha_1, \alpha_3\})=c(\{\alpha_0, \alpha_2\})=(2,\emptyset)$,
\item $c(\{\beta_1, \beta_3\})=c(\{\beta_0, \beta_2\})=(2,\emptyset)$,
\item $\alpha_1\not\in c_1(\{\alpha_2, \beta_2\})$.
\end{enumerate}
Then $\|\Car_{\{\alpha_1,\alpha_2\}}\|_{\A_c}=\|\Car_{\{\beta_1,\beta_2\}}\|_{\A_c}=1$,
$\nu_{\D_c}(\Car_{\{\alpha_1,\alpha_2\}})=\nu_{\D_c}(\Car_{\{\beta_1,\beta_2\}})=\sqrt2$,
$\|\Car_{\{\alpha_1,\alpha_2\}}-
\Car_{\{\beta_1,\beta_2\}}\|_{\A_c}=\sqrt2$ and $\nu_{\D_c}(\Car_{\{\alpha_1,\alpha_2\}}-\Car_{\{\beta_1\beta_2\}})=\sqrt5$.
\end{lemma}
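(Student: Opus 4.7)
The plan is to verify the six equalities one at a time, each splitting into an upper bound (over every $A\in\A_c$ or $D\in\D_c$) and a concrete witness for the matching lower bound.

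For the three $\|\cdot\|_{\A_c}$ quantities, hypothesis (1) puts $\{\alpha_1,\alpha_2\}$ and $\{\beta_1,\beta_2\}$ into $\B_c$; since $\A_c\cap\B_c\subseteq[\omega_1]^1\cup\{\emptyset\}$ ($\A_c,\B_c$ being $T$-families), every $A\in\A_c$ meets each of these two pairs in at most one ordinal, which immediately gives upper bounds $1$, $1$, and $\sqrt 2$. Matching lower bounds come from the singletons $\{\alpha_1\},\{\beta_1\}\in\A_c$ and from the pair $\{\alpha_2,\beta_1\}\in\A_c$ supplied by (2). For the two one-vector $\nu_{\D_c}$-values, the sets $\{\{\alpha_0,\alpha_1\},\{\alpha_2,\alpha_3\}\}$ and $\{\{\beta_0,\beta_1\},\{\beta_2,\beta_3\}\}$ lie in $\D_c$ by (4) and (5) and each contributes $2$, while the matching upper bounds are immediate since the pairs in any $D\in\D_c$ are pairwise disjoint, so each of $\alpha_1,\alpha_2$ (resp.\ $\beta_1,\beta_2$) lies in at most one pair of $D$, contributing at most $1$.

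For the difference, the lower bound $\nu_{\D_c}(\Car_{\{\alpha_1,\alpha_2\}}-\Car_{\{\beta_1,\beta_2\}})\geq\sqrt 5$ is witnessed by $\{\{\alpha_0,\alpha_1\},\{\alpha_2,\beta_2\}\}$, which lies in $\D_c$ by (3) and (4) and contributes $1+4=5$. The main obstacle is the matching upper bound. Writing $x$ for the difference and $S=\{\alpha_1,\alpha_2,\beta_1,\beta_2\}$, each pair in $D\in\D_c$ contributes $0$ (disjoint from $S$), $1$ (meeting $S$ in exactly one ordinal), $0$ (contained in $S$, equal signs) or $4$ (contained in $S$, opposite signs). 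Consecutiveness forces at most one mixed-sign pair in $D$ (any two among $\{\alpha_i,\beta_j\}$ overlap because a middle $S$-ordinal lies between their extremes) and no mixed pair can coexist with $\{\alpha_1,\alpha_2\}$ or $\{\beta_1,\beta_2\}$. A direct position check on the remaining two $S$-ordinals relative to the mixed pair then bounds the total by $5$ in every case except the mixed pair $\{\alpha_2,\beta_1\}$, where both $\alpha_1$ (below) and $\beta_2$ (above) could a priori sit in other pairs, naively yielding $4+1+1=6$.

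Hypothesis (6) is what kills this last configuration. If $P_k=\{\alpha_2,\beta_1\}$ and $\beta_2\in P_j$ for some $j>k$, the defining condition of $\D_c$ at the index pair $(k,j)$ forces either $c(\{\alpha_2,\beta_2\})=(2,\{\xi_l,\eta_l:l<k\})$ (when $\xi_j=\beta_2$) or $c_0(\{\beta_1,\beta_2\})=2$ (when $\eta_j=\beta_2$). The latter contradicts (1), which makes $c_0(\{\beta_1,\beta_2\})=1$. In the former, if additionally $\alpha_1\in P_i$ for some $i<k$ (which is where $\alpha_1$ must sit, since $\alpha_1<\xi_k=\alpha_2$), then $\alpha_1\in c_1(\{\alpha_2,\beta_2\})$, contradicting (6). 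Hence $D$ cannot simultaneously contain pairs with $\alpha_1$ and with $\beta_2$ besides $P_k$, and the total drops back to $4+1=5$.
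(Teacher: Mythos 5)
Your proof is correct and follows essentially the same route as the paper's: upper bounds extracted from how members of $\A_c$ and $\D_c$ can intersect $\{\alpha_1,\alpha_2,\beta_1,\beta_2\}$, explicit witnesses for the lower bounds, and hypotheses (1) and (6) invoked to exclude exactly the configuration the paper isolates (the mixed pair $\{\alpha_2,\beta_1\}$ flanked by further pairs containing $\alpha_1$ and $\beta_2$, which would push the sum to $6$). The only cosmetic difference is your lower-bound witness $\{\{\alpha_0,\alpha_1\},\{\alpha_2,\beta_2\}\}$ for the value $\sqrt5$, justified by (3) and (4), where the paper uses $\{\{\alpha_1,\beta_1\},\{\beta_2,\beta_3\}\}$ via (3) and (5); both are valid elements of $\D_c$ contributing $1+4=5$.
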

\begin{proof}
$\|\Car_{\{\alpha_1,\alpha_2\}}\|_\A=\|\Car_{\{\beta_1,\beta_2\}}\|_\A=1$ is clear by (1).
Note that $\{\{\alpha_0, \alpha_1\}, \{\alpha_2, \alpha_3\}\}\in\D$
and $\{\{\beta_0, \beta_1\}, \{\beta_2, \beta_3\}\}\in\D$ by (4) and (5) and consequently 
$\nu_\D(\Car_{\{\alpha_1,\alpha_2\}})=\nu_\D(\Car_{\{\beta_1,\beta_2\}})=\sqrt2$.

If $A\in \A_c$ then $A\cap \{\alpha_1,\alpha_2\}, A\cap  \{\beta_1, \beta_2\}$  are at most singletons by (1).
So $|A\cap\{\alpha_1,\alpha_2, \beta_1,\beta_2\}|\leq 2$, and hence 
$\sqrt{\Sigma_{\alpha\in A}(\Car_{\{\alpha_1,\alpha_2\}}-
\Car_{\{\beta_1,\beta_2\}})(\alpha)^2)}$
is $0$ or $\sqrt{1^2}$ or $\sqrt{(-1)^2}$ or $\sqrt{(1)^2+(-1)^2}$. The third possibility takes place for 
$\{\alpha_2,\beta_1\}\in \A_c$ by (2). So $\|\Car_{\{\alpha_1,\alpha_2\}}-
\Car_{\{\beta_1, \beta_2\}}\|_{\A_c}=\sqrt2$.

Note that $\{\{\alpha_1, \beta_1\}, \{\beta_2, \beta_3\}\}\in \D_c$ since
$c(\{\alpha_1, \beta_2\})=(2,\emptyset)=c(\{\beta_1, \beta_3\})$ by (3) and (5).
Then $$((\Car_{\{\alpha_1,\alpha_2\}}-\Car_{\{\beta_1,\beta_2\}})(\alpha_1)-
(\Car_{\{\alpha_1,\alpha_2\}}-\Car_{\{\beta_1,\beta_2\}})(\beta_1))^2+$$
$$((\Car_{\{\alpha_1,\alpha_2\}}-\Car_{\{\beta_1,\beta_2\}})(\beta_2)-
(\Car_{\{\alpha_1,\alpha_2\}}-\Car_{\{\beta_1,\beta_2\}})(\beta_3))^2=$$
$$={(1-(-1))^2+(-1-0)^2}=5.$$
So $\nu_{\D_c}(\Car_{\{\alpha_1,\alpha_2\}}- \Car_{\{\beta_1,\beta_2\}})\geq\sqrt5$.
Consider $D\in\D$.
If a pair $\{\xi,\eta\}$ does not meet both $\{\alpha_1,\alpha_2\}$ 
and $\{\beta_1,\beta_2\}$, then 
$|(\Car_{\{\alpha_1,\alpha_2\}}- \Car_{\{\beta_1,\beta_2\}})(\xi)-(\Car_{\{\alpha_1,\alpha_2\}}- \Car_{\{\beta_1,\beta_2\}})(\eta)|$
is $1$ or $0$. So if  no pair of $D$ meets both $\{ \alpha_1, \alpha_2\}$ and $\{ \beta_1,\beta_2\}$, then
$$\sqrt{\Sigma_{\{\xi, \eta\}\in D}|(\Car_{\{\alpha_1,\alpha_2\}}- \Car_{\{\beta_1,\beta_2\}})(\xi)
-(\Car_{\{\alpha_1,\alpha_2\}}- \Car_{\{\beta_1,\beta_2\}})(\eta)|^2}$$ is not greater than
$\sqrt4=2$.
Since elements of $D\in \D$ are consecutive pairs, at most one $d\in D$
may meet both $\{\alpha_1,\alpha_2\}$ and $\{\beta_1,\beta_2\}$.  If only one other pair of $D$ meets 
$\{\alpha_1,\alpha_2, \beta_1,\beta_2\}$ then
$\sqrt{\Sigma_{\{\xi, \eta\}\in D}|(\Car_{\{\alpha_1,\alpha_2\}}- \Car_{\{\beta_1,\beta_2\}})(\xi)
-(\Car_{\{\alpha_1,\alpha_2\}}- \Car_{\{\beta_1,\beta_2\}})(\eta)|^2}$ is not bigger than
$\sqrt5$.  So we are left with proving that there cannot be $D\in \D$
with $d, d', d''\in D$ such that  $d<d'<d''$ and all $d, d', d''$ meeting $\{\alpha_1,\alpha_2, \beta_1,\beta_2\}$
and one of $d, d', d''$ meeting both $\{\alpha_1,\alpha_2\}$ and $\{\beta_1,\beta_2\}$.
This is only possible if $d'=\{\alpha_2, \beta_1\}$, $\alpha_1\in d$ and $\beta_2\in d''$. But then by the definition of $\D_c$
the pair $d$ must be included in the value of $c_1$ on the first elements of $d'$ and $d''$
and in the value of $c_1$ on the second elements of $d'$ and $d''$. So
$\alpha_1\in c_1(\alpha_2, \beta_2)$ or $\alpha_1\in c_1(\beta_1, \beta_2)$ and $c_0(\beta_1, \beta_2)=2$.
These possibilities are excluded by (1) and (6).

\end{proof}

\begin{proposition}\label{zfc-equi}
There is a $T$-coloring $c=(c_0, c_1)\colon [\omega_1]^2\rightarrow 3\times[\omega_1]^{<\omega}$
such that  $X_{\A_c}$ admits an uncountable $\sqrt2$-equilateral set
 and $\X_{\A_c, \D_c}$ admits an uncountable ${{\sqrt2+\sqrt5}\over{\sqrt2+1}}$-equilateral set.
\end{proposition}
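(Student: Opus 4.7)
The plan is to construct $c$ together with a partition of $\omega_1$ into pairwise disjoint consecutive quadruples $Q_\xi=\{q^\xi_0<q^\xi_1<q^\xi_2<q^\xi_3\}$ (with $q^\xi_3<q^\eta_0$ for $\xi<\eta$) so that, setting $\alpha_i=q^\xi_i$ and $\beta_i=q^\eta_i$ in Lemma~\ref{sqrt5}, conditions (1)--(6) of that lemma hold for every $\xi<\eta$; the lemma then supplies both equilateral sets directly. Unpacking those six conditions, I must impose the \emph{internal} prescriptions $c_0(\{q^\xi_1,q^\xi_2\})=1$ and $c(\{q^\xi_0,q^\xi_2\})=c(\{q^\xi_1,q^\xi_3\})=(2,\emptyset)$ for every $\xi$, and the \emph{cross} prescriptions $c_0(\{q^\xi_2,q^\eta_1\})=0$, $c(\{q^\xi_1,q^\eta_2\})=(2,\emptyset)$ and $q^\xi_1\notin c_1(\{q^\xi_2,q^\eta_2\})$ for every $\xi<\eta$. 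On every remaining pair in $[\omega_1]^2$ I let $c$ read off a coordinate of an auxiliary $T$-coloring $\tilde c\colon[\omega_1]^2\to 3\times J$ (which exists by the proposition after Definition~\ref{T-coloring}, citing \cite{acta}), where $J$ is a product of $\omega_1$-size sets with one dedicated coordinate per free position-pair $(i,j)\in\{0,1,2,3\}^2\setminus\{(2,1),(1,2)\}$; at a free cross-pair $\{q^\xi_i,q^\eta_j\}$ the value of $c$ is then the $(i,j)$-coordinate of $\tilde c(\{\xi,\eta\})$, with the constraint at $(2,2)$ implemented by stripping $q^\xi_1$ from that coordinate of $\tilde c_1$ if necessary.

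The main obstacle is verifying that the modified $c$ is itself a $T$-coloring. Given an uncountable pairwise disjoint family $\F=\{\{a_\gamma,b_\gamma\}:\gamma<\omega_1\}$ and targets $(i_0,j_0),(i_1,j_1)\in 3\times[\omega_1]^{<\omega}$, I thin $\F$ so that $a_\gamma=q^{f(\gamma)}_s$ and $b_\gamma=q^{g(\gamma)}_t$ for fixed positions $s,t$ and strictly increasing $f,g$. After a $\Delta$-system reduction on $\{\{f(\gamma),g(\gamma)\}\}$ one is either in Case~(A) $f=g$ (forcing $s\neq t$) or Case~(B) $\{f(\gamma),g(\gamma)\}$ pairwise disjoint. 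The decisive observation is that monotonicity of $f,g$ forces the cross-pairs $\{a_\gamma,a_\delta\}$ and $\{b_\gamma,b_\delta\}$ to sit at the \emph{diagonal} positions $(s,s)$ and $(t,t)$, which never coincide with the hard-coded $(2,1)$ or $(1,2)$; hence both cross-pair colors are determined by $\tilde c$. In Case~(B) one applies the $T$-coloring property of $\tilde c$ directly to the disjoint family $\{\{f(\gamma),g(\gamma)\}\}$ with the obvious product-target; in Case~(A) one invokes the consequence of the $T$-coloring property that every uncountable $X\subseteq\omega_1$ admits $\xi<\eta\in X$ with any prescribed value of $\tilde c(\{\xi,\eta\})$. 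The residual exclusion $q^{f(\gamma)}_1\notin c_1$ at position $(2,2)$ (when $s=2$) blocks the target $j_0$ only for the countably many $\gamma$ with $q^{f(\gamma)}_1\in j_0$, which are discarded harmlessly; the same remark applies when $t=2$.

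Once $c$ is confirmed as a $T$-coloring, Lemma~\ref{sqrt5} applied for each $\xi<\eta$ with $\alpha_i=q^\xi_i,\ \beta_i=q^\eta_i$ yields at once $\|\Car_{\{q^\xi_1,q^\xi_2\}}\|_{\A_c}=1$ and $\|\Car_{\{q^\xi_1,q^\xi_2\}}-\Car_{\{q^\eta_1,q^\eta_2\}}\|_{\A_c}=\sqrt2$, so the family $\{\Car_{\{q^\xi_1,q^\xi_2\}}:\xi<\omega_1\}$ is an uncountable $\sqrt2$-equilateral subset of the unit sphere of $\X_{\A_c}$. The same lemma supplies $\nu_{\D_c}(\Car_{\{q^\xi_1,q^\xi_2\}})=\sqrt2$ and $\nu_{\D_c}(\Car_{\{q^\xi_1,q^\xi_2\}}-\Car_{\{q^\eta_1,q^\eta_2\}})=\sqrt5$, hence in $\X_{\A_c,\D_c}$ each such vector has $\|\cdot\|_{\A_c,\D_c}$-norm $1+\sqrt2$ and every pairwise difference has $\|\cdot\|_{\A_c,\D_c}$-norm $\sqrt2+\sqrt5$; rescaling by $(1+\sqrt2)^{-1}$ produces the desired uncountable $\frac{\sqrt2+\sqrt5}{\sqrt2+1}$-equilateral subset of the unit sphere of $\X_{\A_c,\D_c}$.
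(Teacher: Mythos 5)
Your proposal is correct and follows essentially the same route as the paper's proof: pairwise disjoint consecutive quadruples, hard-coding the finitely many position-pairs needed to invoke Lemma~\ref{sqrt5} for every pair of quadruples, defaulting to an auxiliary $T$-coloring elsewhere, and verifying the $T$-coloring property by thinning to positional homogeneity so that the modified pairs are avoided. The only deviations are bookkeeping choices (coding the free cross-positions into a product-valued auxiliary coloring on quadruple indices, and stripping $q^\xi_1$ from the $(2,2)$-coordinate rather than overriding the whole value as the paper does in its clause (4)), which do not change the substance of the argument.
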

\begin{proof} Let $e=(e_0, e_1)\colon [\omega_1]^2\rightarrow 3\times[\omega_1]^{<\omega}$ be any $T$-coloring.
Let $A_\xi$ be pairwise disjoint quadruples $\{\alpha_\xi^0, \alpha_\xi^1, \alpha_\xi^2, \alpha_\xi^3\}$ for $\xi<\omega_1$
satisfying $\alpha_\xi^0<\alpha_\xi^1<\alpha_\xi^2<\alpha_\xi^3<\alpha_\eta^0$ for any $\xi<\eta<\omega_1$.
For every $\xi<\eta<\omega_1$ we put 
\begin{enumerate}
\item $c(\alpha_\xi^1, \alpha_\xi^2)=(1, \emptyset)$,
 \item $c(\alpha_\xi^1, \alpha_\xi^3)=(2, \emptyset)$,
  \item $c(\alpha_\xi^0, \alpha_\xi^2)=(2, \emptyset)$,
  \item $c(\alpha_\xi^2, \alpha_\eta^2)=(1, \emptyset)$ if $\alpha_\xi^1\in e_1(\alpha_\xi^2, \alpha_\eta^2)$.
 \item $c(\alpha_\xi^1, \alpha_\eta^2)=(2, \emptyset)$,  
 \item $c(\alpha_\xi^2, \alpha_\eta^1)=(0, \emptyset)$,
\end{enumerate}
For all remaining pairs $\{\alpha, \beta\}\in [\omega_1]^2$ we put $c(\{\alpha, \beta\})=e(\{\alpha, \beta\})$.
 This way for every $\xi<\eta$ the sets $\{\alpha_\xi^0, \alpha_\xi^1, \alpha_\xi^2, \alpha_\xi^3\}$ satisfy the hypothesis
 of Lemma \ref{sqrt5} and so 
 $\|\Car_{\{\alpha^\xi_1, \alpha_\xi^2\}}\|_{\A_c, \D_c}=1+\sqrt2$
 $\|\Car_{\{\alpha^\xi_1, \alpha_\xi^2\}}-
 \Car_{\{\alpha^\eta_1, \alpha_\eta^2\}}\|_{\A_c, \D_c}=\sqrt2+\sqrt5$,  
 $\|\Car_{\{\alpha^\xi_1, \alpha_\xi^2\}}\|_{\A_c}=1$
 $\|\Car_{\{\alpha^\xi_1, \alpha_\xi^2\}}-
 \Car_{\{\alpha^\eta_1, \alpha_\eta^2\}}\|_{\A_c}=\sqrt2$ for every $\xi<\eta$ and so
 $\{\Car_{\{\alpha^\xi_1, \alpha_\xi^2\}}: \xi<\omega_1\}$ 
 is $\sqrt2$-equilateral  subset of the unit sphere in $\X_{\A_c}$ and 
 $\{{1\over{1+\sqrt2}}\Car_{\{\alpha^\xi_1, \alpha_\xi^2\}}: \xi<\omega_1\}$ is ${{\sqrt2+\sqrt5}\over{\sqrt2+1}}$-equilateral
 in $\X_{\A_c, \D_c}$.
 
 It remains to verify that $c$ is a $T$-coloring.  So suppose that
 $\{\{a_\gamma(0), a_\gamma(1)\}: \gamma<\omega_1\}$ 
 is pairwise disjoint and $(n, F), (m, G)\in 3\times[\omega_1]^{<\omega}$.
 By passing to an uncountable subfamily of $\{\{a_\gamma(0), a_\gamma(1)\}: \gamma<\omega_1\}$ 
 we may assume that 
 \begin{enumerate}
 \item[(a)] each set 
 $\{\alpha_\xi^0, \alpha_\xi^1, \alpha_\xi^2, \alpha_\xi^3\}$ intersects at most one
 pair $\{a_\gamma(0), a_\gamma(1)\}$
 \item[(b)]  $F\cup G<\{a_\gamma(0), a_\gamma(1)\}$ for every $\gamma<\omega_1$,
\item[(c)] if there are $\gamma, \xi<\omega_1$ such that
$a_\gamma(j)=\alpha_\xi^i$, for some $j\in \{0,1\}, i\in \{0,1,2,3\}$ 
then for every $\gamma<\omega_1$ there is $\xi<\omega_1$ such that
$a_\gamma(j)=\alpha_\xi^i$.
\end{enumerate}

Now use the hypothesis that $e$ is a $T$-coloring and 
find $\gamma<\gamma'<\omega_1$ such that  $e(\{a_\gamma(0), a_{\gamma'}(0)\})=(n, F)$
and  $e(\{a_\gamma(1), a_{\gamma'}(1)\})=(m, G)$.   We claim that  the coloring $e$ was not modified
on these pairs.  Indeed, because of (a) we did not modify by (1) or (2) or (3), because of (b) we did not modify
it by (4) and because of (c) we did not modify it by (5) or (6).
 So $c(\{a_\gamma(0), a_{\gamma'}(0)\})=(i, F)$
and  $c(\{a_\gamma(1), a_{\gamma'}(1)\})=(j, G)$, which proves that $c$ is a $T$-coloring.
 \end{proof}

\section{An $\sw$-space and a renorming of $\ell_2(\omega_1)$ from a strong $T$-coloring}

In this section we work with $\sw$-spaces $\X_\A$, where $\A$ is a strong $T_0$-family;
that is, $\A=\A_c$ for a strong $T$-coloring $c$ (see Definitions  \ref{T-coloring} and \ref{sT-coloring}).

\begin{lemma}\label{finite-support}
 Suppose that $\A$ is a strong $T_0$-family and $r\in \R$.
 Let
$\{x_\xi: \xi<\omega_1\}\subseteq c_{00}(\omega_1)$ consists of vectors with pairwise disjoint
supports and such that $\|x_\xi\|_\A=r$ for all $\xi<\omega_1$.  Then
\begin{itemize}
\item there are $\xi<\eta<\omega_1$ such that $\|x_\xi-x_\eta\|_\A=\sqrt{2}r$,
\item there are $\xi<\eta<\omega_1$ such that  $\|x_\xi-x_\eta\|_\A=r$.
\end{itemize}
\end{lemma}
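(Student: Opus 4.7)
The plan is to apply Lemma \ref{sT-family} to the pairwise disjoint uncountable family of supports $\F=\{\supp(x_\xi):\xi<\omega_1\}$. Discarding the degenerate case $r=0$, for each $\xi<\omega_1$ we first fix a witness $C_\xi\in \A$ with $C_\xi\subseteq \supp(x_\xi)$ such that $\sum_{\alpha\in C_\xi}x_\xi(\alpha)^2=r^2$; such a witness exists because $x_\xi$ has finite support, so the supremum in the definition of $\|x_\xi\|_\A$ is attained on some subset of $\supp(x_\xi)$ belonging to $\A$.

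For the first bullet, Lemma \ref{sT-family} yields distinct indices, which we reorder as $\xi<\eta$, satisfying $\supp(x_\xi)\otimes\supp(x_\eta)\subseteq \A$. Since $C_\xi, C_\eta\in \A$ (so $[C_\xi]^2,[C_\eta]^2\subseteq \A$ by closure under subsets) and $C_\xi\otimes C_\eta\subseteq \A$, we obtain $[C_\xi\cup C_\eta]^2\subseteq \A$, whence $C_\xi\cup C_\eta\in \A$ by the \emph{moreover} clause of Lemma \ref{sT-family}. Testing the norm on this set gives the lower bound
$$\|x_\xi-x_\eta\|_\A\geq \sqrt{\sum_{\alpha\in C_\xi}x_\xi(\alpha)^2+\sum_{\alpha\in C_\eta}x_\eta(\alpha)^2}=\sqrt{2}\,r.$$
For the matching upper bound, for any $A\in \A$ the sets $A\cap\supp(x_\xi)$ and $A\cap\supp(x_\eta)$ lie in $\A$ by closure, and the disjointness of supports gives $\sum_{\alpha\in A}(x_\xi-x_\eta)(\alpha)^2=\sum_{\alpha\in A\cap\supp(x_\xi)}x_\xi(\alpha)^2+\sum_{\alpha\in A\cap\supp(x_\eta)}x_\eta(\alpha)^2\leq r^2+r^2$.

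For the second bullet, Lemma \ref{sT-family} yields distinct indices $\xi<\eta$ with $(\supp(x_\xi)\otimes\supp(x_\eta))\cap\A=\emptyset$. For any $A\in \A$, if $A$ met both $\supp(x_\xi)$ and $\supp(x_\eta)$, then some doubleton $\{\alpha,\beta\}\in\supp(x_\xi)\otimes\supp(x_\eta)$ would belong to $\A$ by closure under subsets, a contradiction. Hence $A\cap(\supp(x_\xi)\cup\supp(x_\eta))$ is contained in exactly one of the two supports, and $\sum_{\alpha\in A}(x_\xi-x_\eta)(\alpha)^2\leq r^2$, giving $\|x_\xi-x_\eta\|_\A\leq r$. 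The reverse inequality follows from Lemma \ref{projection-norm}: since $P_{\supp(x_\xi)}(x_\xi-x_\eta)=x_\xi$, we get $\|x_\xi-x_\eta\|_\A\geq\|x_\xi\|_\A=r$.

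There is no real obstacle here; the only delicate point is ensuring that the witnesses $C_\xi\subseteq\supp(x_\xi)$ get combined into a genuine element of $\A$, which is precisely what the \emph{moreover} clause of Lemma \ref{sT-family} (a consequence of $\A$ being a strong $T_0$-family) is designed to deliver.
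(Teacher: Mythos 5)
Your proof is correct and takes essentially the same route as the paper's: both apply Lemma \ref{sT-family} twice (once to get a pair with all cross-pairs in $\A$, once to get a pair with no cross-pair in $\A$) and use the \emph{moreover} clause to see that the union of the two norming witnesses lies in $\A$. The only cosmetic differences are that the paper applies Lemma \ref{sT-family} to the witness sets $A_\xi$ rather than to the full supports for the $\sqrt{2}r$ case, and leaves the matching upper bounds implicit, which you spell out.
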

\begin{proof} Let $F_\xi$ be the support of $x_\xi$ for
every $\xi<\omega_1$. For each $\xi<\omega_1$ there is
$A_\xi\in\A,\ A_\xi \subseteq F_\xi$ such that 
$\|x_\xi\|_\A=\sqrt{\Sigma_{\alpha\in A_\xi}|x_\xi(\alpha)|^2}$.

Since $\A$ is a strong $T_0$-family,  by Lemma \ref{sT-family} there are
$\xi<\eta<\omega_1$ such that $(F_\xi \otimes F_\eta) \cap \A = \emptyset$.
Note that in this case if $A\in \A$ then either $A\cap F_\xi=\emptyset$
or $A\cap F_\eta=\emptyset$, so
$\|x_\xi-x_\eta\|_\A=\|x_\xi\|_\A=\|x_\eta\|_\A=r$.

Since $\A$ is a strong $T_0$-family,  by Lemma \ref{sT-family} there are also
$\xi<\eta<\omega_1$ such that $A_\xi \otimes A_\eta \subseteq \A$.
As $A_\xi, A_\eta\in \A$, it follows  that $A_\xi\cup A_\eta\in \A$
and so $\|x_\xi-x_\eta\|_\A=\sqrt2 r$.
 
\end{proof}

\begin{proposition}
Let $\A$ be a strong $T_0$-family. For every $\delta>0$ there is $\varepsilon>0$ such that for every uncountable $(1-
\varepsilon)$-separated $\{x_\alpha:\ \alpha<\omega_1\} \subseteq S_{\X_\A}$ there are $
\alpha<\beta <\omega_1$ such that $\|x_\alpha - x_\beta\|_\A > \sqrt{2} - \delta$ and 
there are $\xi<\eta<\omega_1$ such that $\|x_\xi - x_\eta\|_\A < 1+\delta$.
\end{proposition}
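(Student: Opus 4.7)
The plan is to reduce to the disjoint finitely-supported setting of Lemma \ref{finite-support} via approximation and a $\Delta$-system argument, and then to exploit \emph{both} conclusions of that lemma simultaneously: the ``disjoint-supports'' case will pin down a common norm $r$ near $1$, and only then does the ``$A_\xi \cup A_\eta \in \A$'' case yield a pair at distance close to $\sqrt{2}$. Fix small auxiliary parameters $\varepsilon', \varepsilon'' > 0$ and take $\varepsilon$ of the same order, to be chosen in terms of $\delta$ at the end. For each $\alpha$ pick $y_\alpha \in c_{00}(\omega_1)$ with $\|x_\alpha - y_\alpha\|_\A < \varepsilon'$; then $\{y_\alpha\}$ is $(1 - \varepsilon - 2\varepsilon')$-separated and each $\|y_\alpha\|_\A$ lies in $(1 - \varepsilon', 1 + \varepsilon')$.

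Apply the $\Delta$-system lemma to the supports to extract an uncountable $I_1$ and a common root $R$. Since $R$ is finite and the restrictions $y_\alpha|_R$ are uniformly bounded in $\R^R$, a pigeonhole argument gives an uncountable $I_2 \subseteq I_1$ with $\|y_\alpha|_R - y_\beta|_R\|_2 < \varepsilon''$ for all $\alpha, \beta \in I_2$. Set $z_\alpha = y_\alpha - P_R y_\alpha$, so that $\{\supp(z_\alpha): \alpha \in I_2\}$ is pairwise disjoint, and note $\|z_\alpha\|_\A \leq \|y_\alpha\|_\A$ by Lemma \ref{projection-norm}. A further pigeonhole on the bounded values $\|z_\alpha\|_\A$ produces an uncountable $I_3 \subseteq I_2$ on which $\|z_\alpha\|_\A$ lies within $\varepsilon''$ of some common $r$. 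Since $\A$ is closed under subsets and each $\supp(z_\alpha)$ is finite, for each $\alpha \in I_3$ choose a witness $A_\alpha \in \A$ with $A_\alpha \subseteq \supp(z_\alpha)$ and $\|z_\alpha|_{A_\alpha}\|_2 = \|z_\alpha\|_\A$.

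Now I apply Lemma \ref{sT-family} twice. Applied to $\{\supp(z_\alpha): \alpha \in I_3\}$ it yields $\xi < \eta$ in $I_3$ with $(\supp(z_\xi) \otimes \supp(z_\eta)) \cap \A = \emptyset$, so every $A \in \A$ meets at most one of the two supports and $\|z_\xi - z_\eta\|_\A \leq \max(\|z_\xi\|_\A, \|z_\eta\|_\A) \leq r + \varepsilon''$. Combined with the lower bound $\|z_\xi - z_\eta\|_\A > 1 - \varepsilon - 2\varepsilon' - \varepsilon''$ coming from separation and the root-pigeonhole, this forces $r > 1 - \varepsilon - 2\varepsilon' - 2\varepsilon''$. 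Tracing back through $z \to y \to x$ yields $\|x_\xi - x_\eta\|_\A < 1 + 3\varepsilon' + 3\varepsilon''$. Applied instead to $\{A_\alpha: \alpha \in I_3\}$, Lemma \ref{sT-family} gives $\xi' < \eta'$ with $A_{\xi'} \otimes A_{\eta'} \subseteq \A$, hence $A_{\xi'} \cup A_{\eta'} \in \A$ by its moreover part, and
$$\|z_{\xi'} - z_{\eta'}\|_\A \geq \sqrt{\|z_{\xi'}|_{A_{\xi'}}\|_2^2 + \|z_{\eta'}|_{A_{\eta'}}\|_2^2} \geq \sqrt{2}(r - \varepsilon'').$$
Propagating back to $x$ costs only an $O(\varepsilon' + \varepsilon'')$ error, so setting $\varepsilon = \varepsilon' = \varepsilon'' = c\delta$ for a small absolute constant $c$ delivers $\|x_{\xi'} - x_{\eta'}\|_\A > \sqrt{2} - \delta$.

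The main obstacle is the control of $r$: a priori, $P_R y_\alpha$ could absorb almost all of the unit-norm mass, leaving $\|z_\alpha\|_\A$ far from $1$, in which case the $\sqrt{2}\,r$-type lower bound would be useless. What saves the argument is that the \emph{same} strong-$T_0$ property provides both the ``disjoint'' and ``combined'' alternatives simultaneously on the same uncountable index set: the disjoint case, together with separation, forces $r \approx 1$, and this in turn makes the combined case produce a pair of distance close to $\sqrt{2}$.
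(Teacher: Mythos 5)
Your proof is correct and follows essentially the same route as the paper: approximate by finitely supported vectors, pass to a $\Delta$-system, strip the root to get disjoint supports, and then use the two alternatives of the strong $T_0$-property (via Lemma \ref{sT-family}) — the disjoint alternative pinning $r$ near $1$ and the combined alternative producing the $\sqrt{2}r$ pair. The only difference is cosmetic: the paper takes rational-valued approximants so the root restrictions and the norms $\|z_\alpha\|_\A$ can be made exactly equal and Lemma \ref{finite-support} applies verbatim, whereas you run an approximate pigeonhole and absorb the resulting $\varepsilon''$ errors, which works just as well.
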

\begin{proof}
Fix $\delta>0$ and consider $\varepsilon< \frac{\delta}{2\sqrt{2}+1}$ and an uncountable $(1-
\varepsilon)$-separated set $\{x_\alpha:\ \alpha<\omega_1\} \subseteq S_{\X_\A}$. 

For every $\alpha<\omega_1$ let us pick $y_\alpha \in B_{\X_\A}$, $y_\alpha\in c_{00}
(\omega_1)$ with 
rational values such that $\|x_\alpha - y_\alpha\|_\A \leq \frac{\varepsilon}{2}$ . By 
passing to an uncountable set we may assume that $\{ \supp(y_\alpha):\ \alpha<\omega_1\}$ is a $\Delta$-system 
with its root $\Delta$. By passing to a further uncountable set we may assume that $y_\alpha(\delta) = y_\beta(\delta)$
 for every $\delta\in \Delta$ and $\alpha<\beta<\omega_1$. In particular, if $z_\alpha = 
P_{\omega_1\setminus \Delta} (y_\alpha)$, then $\{ \supp(z_\alpha):\ \alpha<\omega_1\}$ are 
pairwise disjoint and $\|z_\alpha-z_\beta\|_\A = \|y_\alpha - y_\beta\|_\A \geq 1-2\varepsilon$. 
As the values of all $z_\alpha$s are
rational we may assume that $\|z_\alpha\|_\A =r$ for some $r\in\R$ and
 for every $\alpha<\omega_1$. 

By Lemma \ref{finite-support} there are $\alpha<\beta <\omega_1$ such that 
$\| y_\alpha - y_\beta\|_\A = \|z_\alpha - z_\beta\|_\A = \sqrt{2}r$
and there are $\xi<\eta<\omega_1$ such that 
$\| y_\xi - y_\eta \|_\A = \|z_\xi - z_\eta\|_\A = r.$
In particular, $r\geq 1-2\varepsilon$ since $\{y_\alpha:\ \alpha<\omega_1\}$ is 
$(1-2\varepsilon)$-separated.
It follows that 
$$\| x_\alpha - x_\beta \|_\A \geq \sqrt{2}r - \varepsilon 
\geq \sqrt{2}(1-2\varepsilon) - \varepsilon = \sqrt{2} - \varepsilon(2\sqrt{2} +1)> \sqrt{2} - \delta.$$
By Lemma \ref{projection-norm} we have $r=\|z_\xi\|_\A\leq\|y_\xi\|_\A \leq 1$ and so
$$\| x_\xi - x_\eta\|_\A \leq r + \varepsilon \leq 1 + \varepsilon < 1 +\delta,$$
which concludes the proof.
\end{proof}

Since Banach spaces admitting uncountable equilateral sets admit
$1$-equilateral sets in their unit sphere, the previous proposition immediately yields the following.

\begin{proposition}\label{equi-sep}
Suppose that $\A$ is strong $T_0$-family and $\varepsilon>0$.
Then $\X_\A$ does not admit any uncountable equilateral set and
 $S_{\X_\A}$ doesn't admit any uncountable $(1+\varepsilon)$-separated set.
\end{proposition}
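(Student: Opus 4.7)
The statement is essentially an immediate corollary of the unnamed proposition proved just above it, combined with the observation (recorded in the footnote to Definition \ref{def-sep}) that a Banach space admits an uncountable equilateral set if and only if its unit sphere admits an uncountable $1$-equilateral set. So the plan is simply to reduce both claims to two quick applications of the previous proposition.

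For the $(1+\varepsilon)$-separated claim, I would argue by contradiction. Suppose that $\{x_\alpha:\alpha<\omega_1\}\subseteq S_{\X_\A}$ is uncountable and $(1+\varepsilon)$-separated. Apply the previous proposition to $\delta=\varepsilon$ to obtain some $\varepsilon'>0$. Since the pairwise distances strictly exceed $1+\varepsilon>1-\varepsilon'$, the set is in particular $(1-\varepsilon')$-separated, so the proposition produces $\xi<\eta<\omega_1$ with $\|x_\xi-x_\eta\|_\A<1+\delta=1+\varepsilon$, contradicting the assumed $(1+\varepsilon)$-separation.

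For the equilateral claim, suppose that $\{z_\alpha:\alpha<\omega_1\}\subseteq\X_\A$ is an uncountable equilateral set with constant distance $c>0$. Rescale by $1/c$ and translate so that $z_0$ becomes the origin, to obtain an uncountable set $\{y_\alpha:0<\alpha<\omega_1\}\subseteq S_{\X_\A}$ whose pairwise distances all equal $1$ (and where each $\|y_\alpha\|=\|y_\alpha-0\|=1$). Pick any $\delta$ with $0<\delta<\sqrt2-1$, and let $\varepsilon>0$ be supplied by the previous proposition for this $\delta$. The set $\{y_\alpha\}$ is trivially $(1-\varepsilon)$-separated, so the proposition delivers $\alpha<\beta$ with $\|y_\alpha-y_\beta\|_\A>\sqrt2-\delta>1$, contradicting $1$-equilaterality.

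No real obstacle arises here: the combinatorial and metric content has already been absorbed into Lemma \ref{finite-support} and the preceding proposition, whose upper bound $<1+\delta$ kills the $(1+\varepsilon)$-separated case and whose lower bound $>\sqrt2-\delta$ kills the equilateral case. The only points needing attention are choosing $\delta=\varepsilon$ in the first application and $\delta<\sqrt2-1$ in the second, together with the trivial monotonicity observation that $(1+\varepsilon)$-separation and $1$-equilaterality each imply $(1-\varepsilon')$-separation for every $\varepsilon'>0$.
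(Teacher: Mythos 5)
Your proposal is correct and matches the paper's argument: the paper also derives this proposition immediately from the preceding (unnamed) proposition, using exactly the rescale-and-translate observation from the footnote to reduce the equilateral case to a $1$-equilateral subset of the sphere, to which the lower bound $>\sqrt2-\delta$ applies, while the upper bound $<1+\delta$ handles the $(1+\varepsilon)$-separated case. You have merely written out the quantifier bookkeeping that the paper leaves implicit.
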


\begin{proposition}\label{somewhat} If $\A$ is a strong $T_0$-family, then every nonseparable subspace of $\X_\A$
contains a subspace isomorphic to $\ell_2$.
\end{proposition}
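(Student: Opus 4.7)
The plan is to extract, from any nonseparable subspace $\Y\subseteq\X_\A$, a countable sequence equivalent to the unit vector basis of $\ell_2$, with the strong $T_0$ hypothesis used to produce the essential combinatorial structure (an infinite $\A$-clique).

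First I would, following the $\Delta$-system reduction used in Proposition \ref{equi-sep}, pick an uncountable $(1-\varepsilon)$-separated family $\{y_\xi:\xi<\omega_1\}\subseteq S_\Y$ for a small $\varepsilon>0$, approximate each $y_\xi$ by a rational-valued $y_\xi'\in c_{00}(\omega_1)$, thin so that $\{\supp(y_\xi')\}$ forms a $\Delta$-system with root $\Delta$ on which all $y_\xi'$ agree, and set $z_\xi=y_\xi'-P_\Delta(y_\xi')$. This gives $\{z_\xi\}\subseteq c_{00}(\omega_1)$ with pairwise disjoint supports $F_\xi$, and pigeonholing the rational values of $\|z_\xi\|_\A$ we may assume $\|z_\xi\|_\A=r$ for a fixed rational $r>0$. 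For each $\xi$ pick $A_\xi\in\A$ with $A_\xi\subseteq F_\xi$ and $\sqrt{\sum_{\alpha\in A_\xi}z_\xi(\alpha)^2}=r$, and choose $a_\xi\in A_\xi$ maximizing $|z_\xi(\cdot)|$, so $|z_\xi(a_\xi)|\geq r/\sqrt{|A_\xi|}$. Thinning once more we may assume $|A_\xi|=k$ is a fixed integer, hence $|z_\xi(a_\xi)|\geq r/\sqrt{k}$ uniformly.

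The combinatorial heart of the proof is to find an infinite $\A$-clique inside $U=\{a_\xi:\xi<\omega_1\}$, i.e.\ distinct indices $\xi_1<\xi_2<\ldots$ with $\{a_{\xi_m},a_{\xi_n}\}\in\A$ for all $m\neq n$. By Lemma \ref{sT-family} applied to singleton families, every uncountable $S\subseteq\omega_1$ contains a pair in $\A$. A pressing-down argument then shows that for any uncountable $V\subseteq\omega_1$, the set
\[
T_V=\{\alpha\in V: N_V(\alpha):=\{\beta\in V\setminus\{\alpha\}:\{\alpha,\beta\}\in\A\}\text{ is at most countable}\}
\]
is itself countable --- otherwise a greedy transfinite recursion inside $T_V$ produces an uncountable $\A$-independent subset of $\omega_1$, contradicting strong $T_0$. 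Therefore for all but countably many $\alpha\in V$, $N_V(\alpha)$ is uncountable. Iterating this observation inside successively smaller uncountable neighborhoods, pick $a_{\xi_1}\in U$ with $N_U(a_{\xi_1})$ uncountable, then $a_{\xi_2}\in N_U(a_{\xi_1})$ with $N_{N_U(a_{\xi_1})}(a_{\xi_2})$ uncountable, and so on. The resulting sequence $\{a_{\xi_n}\}_n$ satisfies $\{a_{\xi_m},a_{\xi_n}\}\in\A$ for all $m\neq n$, and by the "Moreover" clause of Lemma \ref{sT-family} one has $\{a_{\xi_n}:n\in J\}\in\A$ for every finite $J\subseteq\N$.

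With the clique secured, I verify $\ell_2$-equivalence of $\{z_{\xi_n}\}_n$. The upper estimate $\|\sum_n c_n z_{\xi_n}\|_\A\leq r\sqrt{\sum_n c_n^2}$ follows from pairwise disjoint supports together with Lemma \ref{projection-norm}. For the lower estimate, use $A=\{a_{\xi_n}:n\in\supp(c)\}\in\A$ as witness:
\[
\Bigl\|\sum_n c_n z_{\xi_n}\Bigr\|_\A\geq\sqrt{\sum_n c_n^2\, z_{\xi_n}(a_{\xi_n})^2}\geq\frac{r}{\sqrt{k}}\sqrt{\sum_n c_n^2}.
\]
Hence $\overline{\mathrm{span}}\{z_{\xi_n}\}\cong\ell_2$, and a standard small-perturbation argument transfers this to an isomorphic copy of $\ell_2$ inside $\Y$, working with the differences $y_{\xi_n}-y_{\xi_1}\in\Y$ (which are close in $\A$-norm to $z_{\xi_n}-z_{\xi_1}$, since the common root part of $y_\xi'$ cancels). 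The main obstacle is the combinatorial step above --- upgrading the single-pair assertion of strong $T_0$ into an infinite $\A$-clique in $U$; the transfinite pressing-down-on-uncountable-neighborhoods argument, iterated inside successively smaller uncountable subsets of $U$, is what makes this possible.
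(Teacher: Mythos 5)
Your combinatorial core is sound, and it is in fact a slightly different route from the paper's: where the paper colours pairs $\{\xi,\eta\}$ by whether $B_\xi\cup B_\eta\in\A$ and invokes the Erd\H{o}s--Dushnik--Miller theorem (with Lemma \ref{sT-family} ruling out an uncountable independent set), you reprove the needed instance by hand via the pressing-down/greedy argument on uncountable $\A$-neighbourhoods. Both correctly produce an infinite set all of whose finite subsets lie in $\A$, and your two-sided estimate with lower constant $r/\sqrt{k}$ is fine. The problem is elsewhere.

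The genuine gap is the final step, ``a standard small-perturbation argument transfers this to an isomorphic copy of $\ell_2$ inside $\Y$.'' Your disjointly supported vectors $z_{\xi_n}$ are \emph{not} elements of $\Y$: they come from finite-support approximations $y_{\xi_n}'$ chosen within a \emph{fixed} error of $y_{\xi_n}$, and that error cannot be improved after the countable selection, because the $\Delta$-system, the constancy on the root, and the pigeonholing of $r$ and $k$ are all performed on those particular approximations (finer approximations of $y_{\xi_n}$ have larger supports inside $\supp(y_{\xi_n})$, destroying disjointness). The principle of small perturbations needs the errors $\|(y_{\xi_n}-y_{\xi_1})-(z_{\xi_n}-z_{\xi_1})\|_\A$ to be summable against the basis constant; here each is only bounded by a fixed $\varepsilon'$, and $\varepsilon'\sum_n|c_n|$ is not controlled by $(\sum_n c_n^2)^{1/2}$. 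Uniform perturbations genuinely can destroy $\ell_2$-equivalence: for the unit vector basis $(x_n)$ of $\ell_2$ and $y_n=x_n+\varepsilon x_1$ one has $\|\sum_{n\le N}N^{-1/2}y_n\|\ge\varepsilon\sqrt{N}-1\to\infty$ while the coefficients stay normalized in $\ell_2$. So this step does not close. The paper sidesteps the issue entirely: since $\X_\A$ is Hilbert generated, hence WLD, and so is $\Y$, one finds uncountably many nonzero vectors \emph{of $\Y$ itself} with pairwise disjoint (countable) supports, and then runs the clique argument and the $\ell_2$-estimates directly on those vectors, with no approximation and no perturbation. To salvage your route you need that WLD fact, or some other device producing genuinely disjointly supported vectors inside $\Y$.
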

\begin{proof} Let $\Y$ be a nonseparable subspace of $\X_\A$.  By Proposition \ref{hilbert}
the space $\X_\A$ is a Hilbert generated Banach space and hence a WLD Banach space and 
so is $\Y$ (Corollary 9 of \cite{wld}). It follows that there is $\Y'\subseteq \Y$
which is linearly dense  in $\Y$ such that $\{y\in \Y': y^*(y)\not=0\}$ is at most countable for each $y^*\in \Y^*$ (Theorem
7 (iv) of \cite{wld}).
It follows that for every $\alpha<\omega_1$ there is $y\in \Y'\setminus\{0\}$ such that
$y(\beta)=0$ for every $\beta<\alpha$.
As the support of any vector in $\X_\A$ is countable,  this allows us to inductively construct 
nonzero vectors $y_\xi\in \Y'\subseteq\Y$ for $\xi<\omega_1$ which have pairwise disjoint supports.

Let $A_\xi$ be the support of $y_\xi$.  By normalizing $y_\xi$ we may assume that
$y_\xi$s have norm one. Find finite $B_\xi\subseteq A_\xi$ such that  $B_\xi\in \A$ and 
 $$\sqrt{\sum_{\alpha\in B_{\xi}}(y_{\xi}(\alpha))^2}\geq 1/2.$$
Consider a coloring $c\colon[\omega_1]^2\rightarrow \{0,1\}$ defined by $c(\{\xi, \eta\})=0$ if and only if
$B_\xi\cup B_\eta\in \A$.  By the Erd\H{o}s-Dushnik-Miller
theorem and the properties of a strong $T_0$-family (see Lemma \ref{sT-family})
there is an infinite $X\subseteq\omega_1$ which is $0$-monochromatic for $c$.
This implies that all finite subsets of $\bigcup\{B_\xi: \xi\in X\}$ are in $\A$.

We claim that there is an isomorphism $T\colon\ell_2\rightarrow \Y$ such that $T(\Car_{\{n\}})=y_{\xi_n}$,
where $\xi_n$ is the $n$-th element of $X$.   For this it is enough to prove
that 
$${1\over 2}\sqrt{\sum_{n<m}c_n^2}\leq \|\sum_{n<m}c_n y_{\xi_n}\|_\A\leq \sqrt{\sum_{n<m}c_n^2}$$
for any choice of $c_0, \dots c_{m-1} \in \R$. 
For the first inequality we have
$$\|\sum_{n<m}c_n y_{\xi_n}\|_\A\geq \sqrt{\sum_{n<m}\sum_{\alpha\in B_{\xi_n}}(c_n y_{\xi_n}(\alpha))^2}\geq
\sqrt{\sum_{n<m}{1\over 4}c_n^2}={1\over 2}\sqrt{\sum_{n<m}c_n^2}$$
as required. For
the  second inequality first note that for every $\varepsilon>0$ there is $A\in \A$
such that  $\|\sum_{n<m}c_ny_{\xi_n}\|^2_\A\leq \sum_{\alpha\in A}\big(\sum_{n<m}c_ny_{\xi_n}(\alpha)\big)^2+\varepsilon$.
Since $y_{\xi_n}$s have disjoint supports for a given $\alpha\in A$ the
sum $\sum_{n<m}c_ny_{\xi_n}(\alpha)$ have at most one nonzero summand, and hence
for $\supp(y_{\xi_n})\cap A=C_n\in\A$  we  have
 $$\sum_{\alpha\in A}\big(\sum_{n<m}c_ny_{\xi_n}(\alpha)\big)^2=
 \sum_{n<m}\big(\sum_{\alpha\in C_n}(c_ny_{\xi_n}(\alpha))^2\big)\leq
  \sum_{n<m}\|c_ny_{\xi_n}\|^2_\A.$$
As $\varepsilon$ was arbitrary, we have $\|\sum_{n<m}c_ny_{\xi_n}\|^2_\A\leq\sum_{n<m}\|c_ny_{\xi_n}\|^2_\A$
which gives the second inequality.
\end{proof}

\begin{proposition}\label{hilbert-equi} Suppose that $\A$ is a strong $T_0$-family
and $\|\ \|_\n$ is as in Proposition \ref{hilbert}. Then 
the space $(\ell_2(\omega_1), \|\ \|_\n)$ doesn't contain uncountable equilateral sets.
\end{proposition}
\begin{proof}
Note that  it follows from Lemma \ref{projection-norm} that $P_A$ for $A\subseteq\omega_1$ is a projection 
of norm one on the space $(\ell_2(\omega_1), \|\ \|_\n)$.
It is enough to show that the unit sphere $S$ of this 
space doesn't contain such a set. We will show that for every 
non-separable $\Y\subseteq S$ 
there are $x_1, x_2, x_3, x_4\in \Y$ such that $\|x_1 - x_2\|_{2,\A} < \|x_3 - x_4\|_{2,\A}$.

Using the nonseparability of $\Y$ construct a strictly increasing $(\gamma_\alpha:\alpha<\omega_1)$
in $\omega_1$ and $\{x_\alpha: \alpha<\omega_1\}\subseteq \Y$ such that $x_\alpha(\gamma_\alpha)\not=0$
for every $\alpha<\omega_1$. By passing to an uncountable set we may assume that
$|x_\alpha(\gamma_\alpha)|>\varepsilon$ for every $\alpha<\omega_1$.
 
For every $\alpha<\omega_1$ let $y_\alpha \in \ell_2(\omega_1)$ 
be a vector with a finite support and rational coefficients such that 
$\|x_\alpha - y_\alpha\|_{2,\A} < \frac{\varepsilon}{20}$ and $|y_\alpha(\gamma_\alpha)|> \varepsilon$.

By the $\Delta$-system Lemma we  may assume that 
$\{\supp(y_\alpha):\ \alpha<\omega_1\}$ forms a $\Delta$-system with a root $\Delta$
and $\gamma_\alpha > \max \Delta$ for every $\alpha<\omega_1$ as $\gamma_\alpha$s are distinct. 
Let $z_\alpha = P_{\omega_1 \setminus \Delta}(y_\alpha)$. 
Then $z_\alpha$s have rational coefficients and 
 pairwise disjoint finite supports containing $\gamma_\alpha$ and so $\| z_\alpha \|_\A>\varepsilon$.
Once again by passing to an uncountable set we may assume that
 $\|z_\alpha\|_2=R$ and $\|z_\alpha\|_\A =r$ for every $\alpha<\omega_1$ 
 and some $r, R\in \R$, necessarily $R\geq r > \varepsilon$.
 Let us note that for every $\alpha<\beta <\omega_1$ we have 
$$\| y_\alpha - y_\beta\|_\n = \|z_\alpha - z_\beta\|_\n = \| z_\alpha - z_\beta \|_2 
+ \|z_\alpha - z_\beta\|_\A = \sqrt{2} R + \| z_\alpha - z_\beta\|_\A,$$
since the supports of $z_\alpha$ and $z_\beta$ are disjoint and $\|z_\alpha\|_2 = \|z_\beta\|_2 = R$.
Lemma \ref{finite-support} yields that there are $\alpha<\beta < \omega_1$ and   $\xi<\eta<\omega_1$ such that 
$$\|y_\alpha - y_\beta\|_{2,\A} = \sqrt{2}R + \sqrt{2}r \ \ \hbox{and}\ \ 
\|y_\xi - y_\eta\|_{2,\A} = \sqrt{2}R + r.$$
It follows from  the choice of $y_\alpha$s  that
$$\|x_\alpha - x_\beta\|_{2,\A} \geq \sqrt{2}R + \sqrt{2}r - \frac{\varepsilon}{10} \ \ \hbox{and}\ \ 
\|x_\xi - x_\eta\|_{2,\A} \leq \sqrt{2}R + r + \frac{\varepsilon}{10}.$$
However, 
$$\sqrt{2}r - \frac{\varepsilon}{10} > r + \frac{4}{10} r - \frac{\varepsilon}{10} > r
 + \frac{3\varepsilon}{10} > r + \frac{\varepsilon}{10},$$
from which it follows that
$\|x_\xi - x_\eta\|_{2,\A} < \|x_\alpha - x_\beta\|_{2,\A},$
which concludes the proof.
\end{proof}

\section{Questions}

\begin{question}$ $
\begin{enumerate}
\item Is there an infinite dimensional reflexive Banach space without an infinite equilateral set?
\item Is there a nonseparable reflexive (Hilbert generated, WCG, WLD) Banach space without an uncountable equilateral set?
\item Is there an equivalent renorming of $\ell_2(\omega_1)$ without an uncountable equilateral set?
\item Is there a nonseparable Hilbert generated (WCG, WLD) Banach space whose unit sphere does not admit
an  uncountable $(1+)$-separaed set?
\item Is there an equivalent renorming of $c_0(\omega_1)$ without an uncountable equilateral set?
\end{enumerate}
\end{question}
Of course items (1) - (3) ask for absolute examples or for proofs of the consistency of
the negative answers as our Theorem \ref{main} provides consistent positive answer to these questions.
It should be stressed that this does not exclude 
the possibility that a particular absolute example of an $\sss$-space with some extra properties
yields the positive answer to items (1) - (3). An $\sw$-space cannot serve here by Theorem \ref{main-ssw} (3).
More delicate constructions of norms of nonseparable spaces with few operators
than in \cite{ss}  like, for example, in \cite{stevo-jordi} could be relevant here.

However, already a consistent positive answer to items (4) or (5) would be interesting and new.  
The only examples of nonseparable Banach spaces whose unit spheres do not admit
 uncountable $(1+)$-separaed set are presented in \cite{pk-kottman, pk-ad} and are far from being WLD as
 their duals are weakly$^*$ separable. In fact their spheres do not admit even any uncountable
 $1$-separated set and as a consequence such spaces do not contain uncountable Auerbach systems
 nor uncountable equilateral sets. 
Assuming the continuum hypothesis an interesting
consistent equivalent renorming  of $c_0(\omega_1)$  which does not
admit any uncountable Auerbach system was constructed in \cite{hkr}. This should be compared with the fact 
that every equivalent renorming of $c_0$ admits an infinite equilateral set (\cite{mer-pams}).

\bibliographystyle{amsplain}

\begin{thebibliography}{120}


\bibitem{stevo-jordi} S. Argyros, J.  Lopez-Abad, S.  Todorcevic, 
\emph{A class of Banach spaces with few non-strictly singular operators}. J. Funct. Anal. 222 (2005), no. 2, 306--384.
 
 \bibitem{bj}  T. Bartoszy\'nski, H.  Judah, \emph{Set theory. On the structure of the real line.}
  A K Peters, Ltd., Wellesley, MA, 1995.
 
\bibitem{ug} C.  Brech, P.  Koszmider, \emph{On universal spaces for the class 
of Banach spaces whose dual balls are uniform Eberlein compacts}. Proc. Amer. Math. Soc. 141 (2013), no. 4, 1267--1280. 


\bibitem{somewhat}  J. Clark, \emph{Coreflexive and somewhat reflexive Banach spaces}. 
Proc. Amer. Math. Soc. 36 (1972), 421--427. 

\bibitem{integration} R.  Deville, J. Rodr\'\i guez, \emph{Integration in 
Hilbert generated Banach spaces}. Israel J. Math. 177 (2010), 285--306. 

\bibitem{elton-odell} J. Elton and E. Odell, \emph{The unit ball of every infinite-dimensional normed linear space contains 
a $(1+\varepsilon)$-separated sequence}, Colloq. Math. 44 (1981), 105--109. 

\bibitem{smooth} D. Freeman, E.  Odell, B.  Sari,  T. Schlumprecht, 
\emph{Equilateral sets in uniformly smooth Banach spaces.}
Mathematika 60 (2014), no. 1, 219--231. 

\bibitem{hilbert-gen} M.  Fabian, G. Godefroy, P. H\'ajek,  V. Zizler, 
\emph{Hilbert-generated spaces}. J. Funct. Anal. 200 (2003), no. 2, 301--323. 


\bibitem{wld} A. Gonzalez, V. Montesinos, \emph{A note on weakly Lindelof 
determined Banach spaces}. Czechoslovak Math. J. 59 (134) (2009), no. 3, 613--621.

\bibitem{pk-ad} O. Guzm\'{a}n, M. Hru\v{s}\'{a}k, P. Koszmider, 
\emph{Almost disjoint families and the geometry of nonseparable spheres},
{\tt arXiv:2212.05520}.

\bibitem{hajek-adv} P.  H\'ajek, \emph{Hilbert generated Banach spaces need not have a norming Markushevich basis}. 
Adv. Math. 351 (2019), 702--717. 



\bibitem{hkr}  P. H\'ajek, T. Kania, T. Russo, \emph{Separated sets and Auerbach systems in Banach spaces}. 
Trans. Amer. Math. Soc. 373 (2020), no. 10, 6961--6998.

\bibitem{james} R. James, \emph{Bases and reflexivity of Banach spaces}. Ann. of Math. (2) 52 (1950), 518--527. 


\bibitem{tt} T. Kania, T. Kochanek, \emph{Uncountable sets of unit vectors that are separated
by more than $1$}, Studia Math. 232 (2016), no. 1, 19--44.


\bibitem{luzin} M.  Kojman, A. Rinot, J. Stepr\=ans, 
\emph{Ramsey theory over partitions III: Strongly Luzin sets and partition relations.}
Proc. Amer. Math. Soc. 151 (2023), no. 1, 369--384. 


 \bibitem{chang} P. Koszmider, \emph{Projections in weakly compactly generated Banach 
spaces and Chang's conjecture}. J. Appl. Anal. 11 (2005), no. 2, 187--205.

\bibitem{pk-ck} P.  Koszmider, \emph{Uncountable equilateral sets in Banach spaces of the form $C(K)$}. 
Israel J. Math. 224 (2018), no. 1, 83--103. 


\bibitem{pk-wark} P. Koszmider, H. M. Wark, \emph{Large Banach spaces with no infinite equilateral sets}. 
 Bull. Lond. Math. Soc. 54 (2022), no. 6, 2066--2077. 

\bibitem{pk-kottman} P. Koszmider, \emph{Banach spaces in which large subsets of  spheres concentrate}. 
 To appear in  J. Inst. Math. Jussieu. {\tt arXiv:2104.05335 }. 



\bibitem{kottman} C. Kottman, \emph{Subsets of the unit ball that are separated by more than one}. 
Studia Math. 53 (1975), no. 1, 15--27.

\bibitem{kryczka-prus} A. Kryczka, S. Prus, \emph{Separated sequences in nonreflexive Banach spaces}. Proc. Amer. Math.
Soc. 129 (2001), no. 1, 155--163.

\bibitem{mer-pams} S. Mercourakis, G.  Vassiliadis, 
\emph{Equilateral sets in infinite dimensional Banach spaces. }
Proc. Amer. Math. Soc. 142 (2014), no. 1, 205--212. 

\bibitem{mer-ck}  S. Mercourakis, G.  Vassiliadis, \emph{Equilateral sets in Banach spaces of
 the form $C(K)$}. Studia Math. 231 (2015), no. 3, 241--255. 
 
 
 \bibitem{neerven} J. van Neerven, \emph{Separated sequences in uniformly convex Banach spaces}. Colloq. Math. 102
(2005), no. 1, 147--153.


 
 \bibitem{schreier} J. Schreier,  \emph{Ein gegenbeispiel zur theorie der schwachen konvergenz}, 
 Studia Mathematica 2 (1930), no. 1, 58--62.
 
 \bibitem{shelah-graph} S. Shelah, \emph{A graph which embeds all small graphs on any large set 
 of vertices}. Ann. Pure Appl. Logic 38 (1988), no. 2, 171--183. 


\bibitem{ss}  S. Shelah,  J. Steprans, \emph{A Banach space on which there are few operators}. 
Proc. Amer. Math. Soc. 104 (1988), no. 1, 101--105. 

\bibitem{terenzi}  P. Terenzi, \emph{Equilater sets in Banach spaces}. Boll. Un. Mat. Ital. A (7) 3 (1989), no. 1, 119--124.

\bibitem{acta}  S. Todorcevic, \emph{Partitioning pairs of countable ordinals}. Acta Math. 159 (1987), no. 3-4, 261--294.




 
 \bibitem{velleman}  D. Velleman, \emph{Partitioning pairs of countable sets of ordinals}.
J. Symbolic Logic 55 (1990), no. 3, 1019--1021.
 
\bibitem{wark}   H. M. Wark, \emph{A non-separable reflexive Banach space on which there are few operators}. 
J. London Math. Soc. (2) 64 (2001), no. 3, 675--689.

 \bibitem{wark-convex} H. M. Wark, \emph{A non-separable uniformly convex Banach space on which there are few operators}. 
 Studia Math. 241 (2018), no. 3, 241--256. 
 
\end{thebibliography}

\end{document}